\title[On the~cyclic subgroup separability]{On the~cyclic subgroup separability of~the~free product of~two groups with~commuting subgroups}
\author{E.~V.~Sokolov}
\newtheoremstyle{theorem}{10pt}{10pt}{\it}{\parindent}{\bf}{. }{ }{}
\theoremstyle{theorem}
\newtheoremstyle{corollary}{10pt}{10pt}{\it}{\parindent}{\bf}{. }{ }{}
\theoremstyle{corollary}
\newtheoremstyle{proposition}{10pt}{10pt}{\it}{\parindent}{\bf}{. }{ }{}
\theoremstyle{proposition}
\newtheoremstyle{lemma}{10pt}{10pt}{\it}{\parindent}{\bf}{. }{ }{}
\theoremstyle{lemma}
\newtheorem{theorem}{Theorem}[section]
\newtheorem{corollary}[theorem]{Corollary}
\newtheorem{proposition}[theorem]{Proposition}
\newtheorem*{lemma}{Lemma}
\begin{document}

\begin{abstract}
Let $G$ be the~free product of~groups $A$ and~$B$ with~commuting subgroups $H\leqslant A$ and~$K\leqslant B$, and~let $\mathcal{C}$ be the~class of~all finite groups or~the~class of~all finite \hbox{$p$-}groups. We derive the~description of~all $\mathcal{C}$-separable cyclic subgroups of~$G$ provided~this group is residually a~$\mathcal{C}$-group. We prove, in~particular, that if~$A$, $B$ are finitely generated nilpotent groups and~$H$, $K$ are $p^\prime$-isolated in~the~free factors, then all $p^\prime$-isolated cyclic subgroups of~$G$ are separable in~the~class of~all finite \hbox{$p$-}groups. The same statement is true provided~$A$, $B$ are free and~$H$, $K$ are $p^\prime$-isolated and~cyclic.
\end{abstract}

\maketitle

\section*{Introduction}
\footnotetext{\textit{Key words and~phrases:} subgroup separability, residual \hbox{$p$-}finiteness, free product of~two groups with~commuting subgroups.}
\footnotetext{2010 \textit{Mathematics Subject Classification:} 20E26, 20E06.}

Let $\pi$ be a~set of~prime numbers, and~let $\mathcal{F}_{\pi}$ be the~class of~all finite \hbox{$\pi$-}groups (recall that a~finite group is said to~be a~\textit{\hbox{$\pi$-}group} if,~and~only if,~all prime divisors of~its order belong to~$\pi$). It is the~main aim of~this paper to~investigate the~\hbox{$\mathcal{F}_{\pi}$-}separability of~cyclic subgroups of~free products of~two groups with~commuting subgroups.

Recall (see~\cite[section~4.2]{li01}) that \textit{the~free product of~two groups~$A$ and~$B$ with~commuting subgroups $H \leqslant A$ and~$K \leqslant B$},
$$
G = \langle A * B;\ [H,K] = 1 \rangle,
$$
is the~quotient group of~the~ordinary free product~$A * B$ by~the~mutual commutant $[H,K]$ of~$H$ and~$K$.

Certain algorithmic problems are investigated in~respect to~this construction in~\cite{li02, li04, li03}. E.~D.~Loginova \cite{li06, li05} researches its residual properties and~obtains a~condition which is necessary and~sufficient for~$G$ to~be residually an~\hbox{$\mathcal{F}_{\pi}$-}group provided~$\pi$ either coincides with~the~set of~all prime numbers or~is an~one-element set. Also, she proves that if~all cyclic subgroups of~$A$ and~$B$ are separable in~the~class of~all finite groups, then $G$ possesses the~same property. At~last, D.~Tieudjo and~D.~I.~Moldavanskii \cite{li08, li07, li09, li10} consider in~detail the~special case of~the~given construction when~$A$ and~$B$ are cyclic.

This paper generalizes the~results of~E.~D.~Loginova and~D.~Tieudjo mentioned above. We derive the~description of~all \hbox{$\mathcal{F}_{\pi}$-}separ\-able cyclic subgroups of~$G$ provided~$\pi$ either coincides with~the~set of~all prime numbers or~is an~one-element set and~not necessarily all cyclic subgroups of~the~free factors are \hbox{$\mathcal{F}_{\pi}$-}separ\-able.

The~first section of~the~paper is devoted to~the~discussion of~the~relationship between the~notions of~subgroup separability, subgroup isolatedness and~quasi-regu\-lari\-ty. The~second section contains the~proof of~the~main theorem. Some corollaries and~applications of~this result are given in~the~third section of~the~paper.

\section{Separability, isolatedness and~quasi-regu\-lari\-ty}

Let $X$ be a~group, and~let $\Omega$ be a~family of~normal subgroups of~$X$. We shall say that a~subgroup~$Y$ of~$X$ \textit{is separable by}~$\Omega$ if
$$
\bigcap_{N \in \Omega} YN = Y.
$$

For each subgroup~$Y$, we call the~subgroup
$$
\bigcap_{N \in \Omega} YN
$$
the~\textit{$\Omega$-closure} of~$Y$ and~denote it by~$\Omega$\hbox{-\textit{Cl}}$(Y)$. It is easy to~see that the~$\Omega$-closure of~$Y$ is the~least subgroup which contains~$Y$ and~is separable by~$\Omega$.

Let further $\Omega_{\pi}(X)$ denote the~family of~all normal subgroups of~finite \hbox{$\pi$-}index of~$X$ (i.~e. the~family of~all such subgroups that the~quotient groups by~them belong to~$\mathcal{F}_{\pi}$). If a~subgroup~$Y$ is separable by~$\Omega_{\pi}(X)$, it is called \textit{\hbox{$\mathcal{F}_{\pi}$-}separ\-able} in~$X$. A~group~$X$ is said to~be \textit{residually an~\hbox{$\mathcal{F}_{\pi}$-}group} if~its trivial subgroup is \hbox{$\mathcal{F}_{\pi}$-}separ\-able. If~$\pi$ coincides with~the~set of~all prime numbers and~$\mathcal{F}_{\pi}$ is the~class of~all finite groups respectively, we get the~well known notions of~\textit{(finite) subgroup separability} and~\textit{residual finiteness}.

We shall say also that a~group~$X$ is \textit{\hbox{$\mathcal{F}_{\pi}$-}quasi-regu\-lar} by~its subgroup~$Y$ if, for~each subgroup $M \in \Omega_{\pi}(Y)$, there exists a~subgroup $N \in \Omega_{\pi}(X)$ such that $N \cap Y \leqslant M$. The~notion of~quasi-regu\-lari\-ty is closely tied to~separability as~the~next statement shows.

\begin{proposition}\label{prop11}
Let $X$ be a~group, and~let $Y$ be its \hbox{$\mathcal{F}_{\pi}$-}separ\-able subgroup. Then $X$ is \hbox{$\mathcal{F}_{\pi}$-}quasi-regu\-lar by~$Y$ if, and~only if, all subgroups of~$\Omega_{\pi}(Y)$ are \hbox{$\mathcal{F}_{\pi}$-}separ\-able in~$X$.
\end{proposition}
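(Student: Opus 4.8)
The plan is to establish the two implications separately; the only non-formal ingredient, used in both directions, is the standard fact that $\Omega_\pi(X)$ is closed under finite intersections (if $N_1,\dots,N_r\in\Omega_\pi(X)$, then $X/\bigcap_i N_i$ embeds into $\prod_i X/N_i$, a finite $\pi$-group), and likewise for $\Omega_\pi(Y)$; note also that $X\in\Omega_\pi(X)$, so these families are never empty.

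For the forward implication I would assume $X$ is $\mathcal{F}_\pi$-quasi-regular by $Y$ and fix an arbitrary $M\in\Omega_\pi(Y)$, aiming to show $\bigcap_{N\in\Omega_\pi(X)}MN=M$. Given $x$ in the left-hand side, first observe that $MN\leqslant YN$ for every $N$, so $x\in\bigcap_{N}YN=Y$ by the assumed $\mathcal{F}_\pi$-separability of $Y$. Next, quasi-regularity provides $N_0\in\Omega_\pi(X)$ with $N_0\cap Y\leqslant M$; since $x\in MN_0$ we may write $x=mn$ with $m\in M$ and $n\in N_0$, and then $n=m^{-1}x\in Y$ because $m,x\in Y$. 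Hence $n\in N_0\cap Y\leqslant M$ and $x=mn\in M$, which proves that $M$ is $\mathcal{F}_\pi$-separable in $X$.

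For the converse I would assume every subgroup of $\Omega_\pi(Y)$ is $\mathcal{F}_\pi$-separable in $X$, fix $M\in\Omega_\pi(Y)$, and produce $N_0\in\Omega_\pi(X)$ with $N_0\cap Y\leqslant M$. Since $M$ has finite index in $Y$, I pick coset representatives $y_1,\dots,y_k$ of $M$ in $Y$. For each index $i$ with $y_i\notin M$, the $\mathcal{F}_\pi$-separability of $M$ in $X$ yields $N_i\in\Omega_\pi(X)$ with $y_i\notin MN_i$; set $N_0=\bigcap_{y_i\notin M}N_i$, which lies in $\Omega_\pi(X)$ by the closure remark above. If $y\in N_0\cap Y$, then $y\in My_i$ for some $i$, so $y_i\in My\subseteq MN_0\subseteq MN_i$; by the choice of $N_i$ this is impossible unless $y_i\in M$, in which case $y\in My_i=M$. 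Thus $N_0\cap Y\leqslant M$, and $X$ is $\mathcal{F}_\pi$-quasi-regular by $Y$.

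I do not expect a genuine obstacle here: once the closure of $\Omega_\pi(X)$ under finite intersections is in hand, the argument is pure coset bookkeeping, and the hypothesis that $Y$ itself is $\mathcal{F}_\pi$-separable is needed only at the single step in the forward direction where $x$ is pulled back into $Y$.
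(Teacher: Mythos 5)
Your proposal is correct and follows essentially the same route as the paper: the forward direction uses the $\mathcal{F}_{\pi}$-separability of $Y$ to pull the element back into $Y$ and then the quasi-regularity to place it in $M$, while the converse uses coset representatives of $M$ in $Y$ and a single subgroup of $\Omega_{\pi}(X)$ avoiding the nontrivial cosets. The only difference is cosmetic: you make explicit the closure of $\Omega_{\pi}(X)$ under finite intersections, which the paper uses implicitly when choosing one $N$ excluding all the representatives $y_{2},\ldots,y_{n}$ at once.
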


\begin{proof}
\textit{Necessity.} Let $M$ be a~subgroup of~$\Omega_{\pi}(Y)$, and~let $x \in X$ be an~arbitrary element which does not belong to~$M$. If~$x \notin Y$, we can use the~\hbox{$\mathcal{F}_{\pi}$-}separability of~$Y$ and~find a~subgroup $N \in \Omega_{\pi}(X)$ such that $x \notin YN$ and, hence, $x \notin MN$.

Let now $x \in Y$. Since $X$ is \hbox{$\mathcal{F}_{\pi}$-}quasi-regu\-lar by~$Y$, there exists a~subgroup $N \in \Omega_{\pi}(X)$ satisfying the~condition $N \cap Y \leqslant M$. It is easy to~see that in~this case $MN \cap Y=M$ and~so again $x \notin MN$.

Thus, $M$ is \hbox{$\mathcal{F}_{\pi}$-}separ\-able in~$X$.

\textit{Sufficiency.} Let again $M$ be an~arbitrary subgroup of~$\Omega_{\pi}(Y)$, and~let $1=y_{1}, \ldots, y_{n}$ be a~set of~representatives of~all cosets of~$M$ in~$Y$. For~$M$ is \hbox{$\mathcal{F}_{\pi}$-}separ\-able in~$X$, we can find a~subgroup $N \in \Omega_{\pi}(X)$ such that $y_{2}, \ldots, y_{n} \notin MN$.

If~we suppose now that the~intersection $N \cap Y$ is not contained in~$M$, choose an~element $g \in (N \cap Y)\backslash M$, and~write it in~the~form $g=xy_{i}$ for~suitable $i \in \{2, \ldots, n\}$ and~$x \in M$, then we get $xy_{i} \in N$. But~it follows from here that $y_{i} \in MN$ what contradicts the~choice of~$N$. Thus, $N \cap Y \leqslant M$ and~$X$ is \hbox{$\mathcal{F}_{\pi}$-}quasi-regu\-lar by~$Y$.
\end{proof}

We call a~subgroup~$Y$ of~a~group~$X$ \textit{subnormal} in~this group if~there exists a~sequence of~subgroups
$$
Y=Y_{0} \leqslant Y_{1} \leqslant \ldots \leqslant Y_{n}=X
$$
every term of~which is normal in~the~next.

\begin{proposition}\label{prop12}
If~$Y$ is a~subnormal subgroup of~finite \hbox{$\pi$-}index of~a~group~$X$, then the~\hbox{$\mathcal{F}_{\pi}$-}quasi-regu\-lari\-ty of~$X$ by~$Y$ holds.
\end{proposition}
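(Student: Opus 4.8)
The plan is to reduce the subnormal case to the normal case by induction on the length $n$ of a subnormal series
$$
Y = Y_0 \trianglelefteq Y_1 \trianglelefteq \ldots \trianglelefteq Y_n = X,
$$
so the first thing I would do is dispose of the base step $n = 1$, in which $Y$ is actually normal of finite $\pi$-index in $X$. In that situation, given any $M \in \Omega_\pi(Y)$, I would like to produce $N \in \Omega_\pi(X)$ with $N \cap Y \leqslant M$. The natural candidate is the core $N = \bigcap_{x \in X} x^{-1} M_0 x$ of a suitable finite-index subgroup $M_0$ of $Y$; since $Y$ has finite index in $X$ there are only finitely many conjugates, so $N$ is of finite index in $X$, and one checks $N \leqslant Y$ (because $M_0 \leqslant Y$ and $Y \trianglelefteq X$), hence $N \cap Y = N \leqslant M_0$. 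The remaining point is to see that the index $[X:N]$ is a $\pi$-number: this follows because $[X:Y]$ is a $\pi$-number by hypothesis and $[Y:N]$ divides a product of copies of the $\pi$-number $[Y:M_0]$ (one factor for each coset representative, via the standard embedding of $Y/N$ into a direct power of $Y/M_0$). Choosing $M_0 = M$ itself then gives $N \cap Y = N \leqslant M$, as required.

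For the inductive step I would set $Z = Y_{n-1}$, so that $Y \trianglelefteq \ldots \trianglelefteq Z$ has length $n-1$ and $Z \trianglelefteq X$ of finite $\pi$-index. Given $M \in \Omega_\pi(Y)$, the induction hypothesis applied to the pair $(Z, Y)$ furnishes a subgroup $L \in \Omega_\pi(Z)$ with $L \cap Y \leqslant M$; then the base case applied to the normal pair $(X, Z)$ furnishes $N \in \Omega_\pi(X)$ with $N \cap Z \leqslant L$. Intersecting with $Y \leqslant Z$ gives $N \cap Y = (N \cap Z) \cap Y \leqslant L \cap Y \leqslant M$, which is exactly the quasi-regularity condition, and $N \in \Omega_\pi(X)$ is already in the right family. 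This chaining is clean once the base case is in hand.

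The one genuine subtlety — the step I expect to be the main obstacle — is the verification that the core $N$ has $\pi$-power index in $X$, i.e. controlling the primes dividing $[X:N]$. One must be careful that ``finite $\pi$-index'' is genuinely inherited: $[X:N] = [X:Y]\cdot[Y:N]$, the first factor is a $\pi$-number by hypothesis, and for the second one uses that $N = \mathrm{core}_Y(M) := \bigcap_{z \in Z}$ … no — more precisely that $N$ is the intersection of the finitely many $X$-conjugates of $M$, each of which is normal and of $\pi$-index in $Y$ (conjugation by $x \in X$ preserves $Y$ and the index), so $Y/N$ embeds in a finite direct product of the $\pi$-groups $Y/(x^{-1}Mx)$ and is therefore itself a $\pi$-group. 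Everything else is bookkeeping with finite indices and the elementary fact that a subgroup of finite index in a group of finite index is of finite index, with indices multiplying.
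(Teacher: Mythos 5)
Your argument is correct and is essentially the paper's own proof: both proceed by induction on the length of the subnormal series, with the key step being the intersection of the finitely many conjugates of a normal $\pi$-index subgroup and the embedding of the resulting quotient into a direct product of finite $\pi$-groups. The only (cosmetic) difference is organizational: the paper runs the induction on a series for $M$ itself and collapses the bottom two terms by taking the core of $M$ in $M_2$, producing $N\in\Omega_{\pi}(X)$ with $N\leqslant M$, whereas you peel off the top term $Z=Y_{n-1}$ and chain the inductive conclusion for $(Z,Y)$ with the normal base case for $(X,Z)$.
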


\begin{proof}
Let $M$ be an~arbitrary subgroup of~$\Omega_{\pi}(Y)$. Since $Y$ is a~subnormal subgroup of~finite \hbox{$\pi$-}index of~$X$, then $M$ possesses the~same properties. Hence, there exists a~series
$$
M=M_{0} \leqslant M_{1} \leqslant \ldots \leqslant M_{n}=X
$$
such that $M_{i} \in \Omega_{\pi}(M_{i + 1})$, $i=0, 1, \ldots , n-1$. Let us show that there is a~subgroup $N \in \Omega_{\pi}(X)$ such that $N \leqslant M$ and, therefore, $N \cap Y \leqslant M$.

We shall use induction on~$n$.

With~$n=1$ we have $M \in \Omega_{\pi}(X)$ and~so can simply put $N=M$.

Let now $n>1$ and
$$
N_1 = \bigcap _{x \in M_2 } M^x.
$$

It is obvious that $N_{1}$ is normal in~$M_{2}$. Consider a~set of~representatives of~all right cosets of~$M$ in~$M_{2}$: $\{x_{1}, x_{2}, \ldots, x_{q}\}$.

Since any element $x \in M_{2}$ can be written in~the~form $x=m_{x}x_{i}$ for~suitable $i \in \{1, \ldots, q\}$ and~$m_{x} \in M$, then $M^{x}=M^{x_i}$ and
$$
N_1 = \bigcap_{i = 1}^q M^{x_i}.
$$

For~$M_{1}$ is normal in~$M_{2}$, so it is invariant under the~conjugation by~any element~$x_{i}$. From this it follows that all subgroups~$M^{x_i}$ belong to~$\Omega_{\pi}(M_{1})$. Hence, the~quotient group $M_{1}/N_{1}$ is isomorphic to~a~subgroup of~the~direct product of~the~finite \hbox{$\pi$-}groups $M_{1}/M^{x_i}$ and~so is itself a~finite \hbox{$\pi$-}group.

Thus, $N_{1}$ belongs to~$\Omega_{\pi}(M_{2})$, and~we can apply the~induction hypothesis to~it. In accordance with~this hypothesis there exists a~subgroup $N \in \Omega_{\pi}(X)$ such that $N \leqslant N_{1}$. Since $N_{1} \leqslant M$, $N$ is desired, and~the~proposition is proved.
\end{proof}

As~usual, we denote by~$\pi^\prime$ the~complement of~a~set~$\pi$ in~the~set of~all prime numbers. Recall that a~subgroup~$Y$ of~a~group~$X$ is said to~be \textit{\hbox{$\pi^\prime$-}iso\-lated} in~$X$ if, for~each element $x \in X$ and~for each number $q \in \pi^\prime$, the~inclusion $x^{q} \in Y$ implies $x \in Y$.

\begin{proposition}\label{prop13}
Every \hbox{$\mathcal{F}_{\pi}$-}separ\-able subgroup is \hbox{$\pi^\prime$-}iso\-lated.\linebreak In~particular, if~a~group is residually an~\hbox{$\mathcal{F}_{\pi}$-}group, then it is \hbox{$\pi^\prime$-}torsion-free.
\end{proposition}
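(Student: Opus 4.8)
The plan is to show the contrapositive of the first assertion: if a subgroup $Y$ of $X$ is \emph{not} $\pi^\prime$-isolated, then it is not $\mathcal{F}_{\pi}$-separable. So suppose there is an element $x \in X$ and a prime $q \in \pi^\prime$ with $x^{q} \in Y$ but $x \notin Y$. I would like to conclude that $x$ lies in $YN$ for every $N \in \Omega_{\pi}(X)$, which by definition says $x \in \Omega_{\pi}(X)$\hbox{-\textit{Cl}}$(Y)$ and hence that this closure is strictly larger than $Y$.

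The key step is the following elementary observation about coset arithmetic modulo a normal subgroup of finite $\pi$-index. Fix $N \in \Omega_{\pi}(X)$ and pass to the finite $\pi$-group $\bar{X} = X/N$, writing $\bar{x}$ for the image of $x$ and $\bar{Y}$ for the image of $Y$. Since $x^{q} \in Y$ we have $\bar{x}^{q} \in \bar{Y}$. Now $q \in \pi^\prime$, so $q$ is coprime to $|\bar{X}|$ and therefore to the order of $\bar{x}$ in $\bar{X}$; hence there is an integer $t$ with $qt \equiv 1$ modulo that order, giving $\bar{x} = (\bar{x}^{q})^{t} \in \bar{Y}$. Translating back, this means $x \in YN$. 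As $N$ was an arbitrary member of $\Omega_{\pi}(X)$, we obtain $x \in \bigcap_{N \in \Omega_{\pi}(X)} YN$, while $x \notin Y$; so $Y$ fails to be separable by $\Omega_{\pi}(X)$, i.e. it is not $\mathcal{F}_{\pi}$-separable. This proves the first sentence.

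For the second sentence, apply the first to $Y = 1$: if $X$ is residually an $\mathcal{F}_{\pi}$-group then the trivial subgroup is $\mathcal{F}_{\pi}$-separable, hence $\pi^\prime$-isolated. But $1$ being $\pi^\prime$-isolated says precisely that $x^{q} = 1$ with $q \in \pi^\prime$ forces $x = 1$, which is the statement that $X$ has no non-trivial elements of order a $\pi^\prime$-number, i.e. $X$ is $\pi^\prime$-torsion-free.

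I do not expect a genuine obstacle here; the only point requiring a moment's care is the coprimality argument in a finite group — one must note that $q \in \pi^\prime$ guarantees $\gcd(q, |X/N|) = 1$ because $X/N$ is a $\pi$-group, so raising to the $q$-th power is a bijection on $\langle \bar{x} \rangle$ and in particular the $q$-th power of a coset representative determines the coset itself. Everything else is unwinding definitions of separability and of the $\Omega$-closure introduced earlier in this section.
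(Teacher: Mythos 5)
Your proof is correct and follows essentially the same route as the paper: for an arbitrary $N \in \Omega_{\pi}(X)$ one uses that the order of $x$ modulo $N$ is a $\pi$-number, hence coprime to $q$, to invert the $q$-th power and conclude $x \in YN$, so a non-$\pi^\prime$-isolated subgroup cannot be $\mathcal{F}_{\pi}$-separable; the second assertion is the case $Y=1$ exactly as in the paper.
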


\begin{proof}
Let us suppose that a~subgroup~$Y$ of~a~group~$X$ is not \hbox{$\pi^\prime$-}iso\-lated and~$x \in X$ is such an~element that $x \notin Y$ but $x^{q} \in Y$ for~some number $q \in \pi^\prime$. Let also $N$ be an~arbitrary subgroup of~$\Omega_{\pi}(X)$, and~let $n$ be the~order of~$x$ modulo this subgroup.

Since $n$ is a~\hbox{$\pi$-}num\-ber, there exists a~natural number~$m$ such that $qm \equiv 1 \pmod n$ and, hence, $x \equiv x^{qm} \pmod N$. Then $x \in YN$ and, because $N$ has selected arbitrarily, $Y$ is not \hbox{$\mathcal{F}_{\pi}$-}separ\-able in~$X$.
\end{proof}

Let us call the~least \hbox{$\pi^\prime$-}iso\-lated subgroup of~a~group $X$ containing a~subgroup $Y$ the~\textit{\hbox{$\pi^\prime$-}iso\-lator} of~$Y$ in~$X$ and~denote it by~$\mathcal{I}_{\pi^\prime}(X, Y)$.

\begin{proposition}\label{prop14}
If~$X$ is residually an~\hbox{$\mathcal{F}_{\pi}$-}group, then the~\hbox{$\pi^\prime$-}iso\-lator of~an~arbitrary abelian subgroup~$Y$ of~$X$ is an~abelian subgroup and~coincides with~the~set of~all \hbox{$\pi^\prime$-}roots being extracted from the~elements of~$Y$ in~$X$.
\end{proposition}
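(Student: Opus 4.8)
The plan is to show two containments. Let $R$ denote the set of all $\pi'$-roots extracted from elements of $Y$, i.e. $R=\{x\in X:\ x^q\in Y\ \text{for some}\ q\in\pi'\}$. Since $Y$ is abelian and $X$ is $\pi'$-torsion-free (Proposition~\ref{prop13}), I first want to prove that $R$ is an abelian subgroup; then, since $R$ is $\pi'$-isolated and contains $Y$ while every $\pi'$-isolated subgroup containing $Y$ must contain all $\pi'$-roots of its elements and in particular all of $R$, it will follow that $R=\mathcal{I}_{\pi'}(X,Y)$.

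First I would show that any two elements of $R$ commute. Take $u,v\in R$ with $u^{q}, v^{r}\in Y$ for suitable $q,r\in\pi'$. Since $Y$ is abelian, $u^{q}$ and $v^{r}$ commute; the standard fact is that in a torsion-free-enough group, commuting powers force the elements to commute. Concretely, consider the element $w=[u,v]=u^{-1}v^{-1}uv$. The plan is to use that $u$ commutes with $v^{r}$ and $v$ commutes with $u^{q}$ (both because these powers lie in the abelian group $Y$) to deduce, via a residually-$\mathcal{F}_\pi$ argument, that $w$ maps to the trivial element in every $N\in\Omega_\pi(X)$. Indeed in the finite $\pi$-group $X/N$ the images $\bar u,\bar v$ have $\pi$-power order, so $\bar u$ is a $\pi'$-power of itself and hence a power of $\bar u^{\,q}\in \overline Y$ (choose $m$ with $qm\equiv 1$ mod the order of $\bar u$), giving $\bar u\in\langle\overline Y\rangle$-centralizer arguments; the cleanest route is: $\bar u$ commutes with $\bar v^{\,r}$, and $\bar v$ equals a power of $\bar v^{\,r}$, so $\bar u$ commutes with $\bar v$. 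Thus $w\in N$ for all such $N$, and residual $\mathcal{F}_\pi$-ness gives $w=1$, i.e. $uv=vu$.

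Next, with commutativity in hand, I would check $R$ is closed under the group operations. If $u^{q},v^{r}\in Y$ then, as $u,v$ commute, $(uv)^{qr}=u^{qr}v^{qr}=(u^{q})^{r}(v^{r})^{q}\in Y$ since $Y$ is abelian; and $(u^{-1})^{q}=(u^{q})^{-1}\in Y$. Because $qr\in\pi'$, this shows $uv, u^{-1}\in R$, so $R$ is a subgroup, and it is abelian by the previous paragraph. It obviously contains $Y$ (take $q=1$). That $R$ is $\pi'$-isolated is direct: if $x^{s}\in R$ with $s\in\pi'$, then $(x^{s})^{q}\in Y$ for some $q\in\pi'$, so $x^{sq}\in Y$ with $sq\in\pi'$, whence $x\in R$. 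Finally, any $\pi'$-isolated subgroup $Z$ with $Y\leqslant Z$ contains every $x$ with $x^{q}\in Y\leqslant Z$, so $R\leqslant Z$; therefore $R$ is the least such subgroup, i.e. $R=\mathcal{I}_{\pi'}(X,Y)$, completing the proof.

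The main obstacle is the first step — proving that $\pi'$-roots of elements of an abelian subgroup commute with each other. This is where residual $\mathcal{F}_\pi$-ness (not merely $\pi'$-torsion-freeness) is essential: the argument must be pushed through each finite $\pi$-quotient, using that in a $\pi$-group every element is a power of any of its $\pi'$-powers, and only then lifted back via separability of the trivial subgroup. Everything after that is routine manipulation.
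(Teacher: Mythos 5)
Your proof is correct and rests on the same mechanism as the paper's: commutativity of two $\pi^\prime$-roots is forced in every finite $\pi$-quotient, where coprimality of a $\pi^\prime$-number with the ($\pi$-number) order of an image makes each root a power of its $\pi^\prime$-th power lying in the abelian image of~$Y$, so the commutator lies in every $N\in\Omega_{\pi}(X)$ and vanishes by residual $\mathcal{F}_{\pi}$-ness; the closing verifications (the root set is a subgroup, is $\pi^\prime$-isolated, and is minimal among isolated subgroups containing~$Y$) match the paper's as well. The only difference is organizational: the paper packages the coprimality trick through Proposition~\ref{prop13} and the containment of $\mathcal{I}_{\pi^\prime}(X,Y)$ in the $\Omega_{\pi}(X)$-closure of~$Y$, whereas you inline that argument directly on the set of roots.
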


\begin{proof}
Since the~$\Omega_{\pi}(X)$-closure of~$Y$\,is\,\hbox{$\mathcal{F}_{\pi}$-}separ\-able,\,then\,it\,is\,a\,\hbox{$\pi^\prime$-}iso\-lated subgroup of~$X$ containing~$Y$. Therefore,
$$
\mathcal{I}_{\pi^\prime}(X, Y) \leqslant \Omega_{\pi}(X)\text{-\textit{Cl}}(Y).
$$

Suppose that the~commutator of~some elements $x_{1}, x_{2} \in \mathcal{I}_{\pi^\prime}(X, Y)$ is not equal to~$1$. Then, since $X$ is residually an~\hbox{$\mathcal{F}_{\pi}$-}group, there exists a~subgroup $N \in \Omega_{\pi}(X)$ such that $[x_{1}, x_{2}] \notin N$. From the~other hand, $x_{1}, x_{2} \in \Omega_{\pi}(X)$\hbox{-\textit{Cl}}$(Y)$ in~view of~the~inclusion mentioned above, and, in~particular, $x_{1}, x_{2} \in YN$. So the~elements $x_{1}N$, $x_{2}N$ belong to~the~abelian subgroup~$YN/N$ of~$X/N$ and~$[x_{1}N, x_{2}N]=1$. It follows from here that $[x_{1}, x_{2}] \in N$, and~we get a~contradiction proving that $\mathcal{I}_{\pi^\prime}(X, Y)$ is an~abelian group.

Let now $u, v \in X$ be arbitrary elements such that $u^{q}, v^{r} \in Y$ for~some \hbox{$\pi^\prime$-}num\-bers~$q$ and~$r$. Then $u, v \in \mathcal{I}_{\pi^\prime}(X, Y)$ and~$(uv)^{qr}=u^{qr}v^{qr} \in Y$. Therefore, the~set of~all \hbox{$\pi^\prime$-}roots being extracted from the~elements of~$Y$ in~$X$ is a~subgroup and, hence, coincides with~$\mathcal{I}_{\pi^\prime}(X, Y)$.
\end{proof}

\begin{proposition}\label{prop15}
If~$X$ is residually an~\hbox{$\mathcal{F}_{\pi}$-}group, then the~\hbox{$\pi^\prime$-}iso\-lator of~an~arbitrary locally cyclic subgroup~$Y$ of~$X$ is a~locally cyclic subgroup.
\end{proposition}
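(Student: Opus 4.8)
The plan is to reduce the statement to a small piece of abelian-group structure theory. First I would note that a locally cyclic group is abelian, so Proposition~\ref{prop14} applies to $Y$: the isolator $\mathcal{I}_{\pi^\prime}(X, Y)$ is abelian and coincides with the set of all $\pi^\prime$-roots extracted from the elements of~$Y$ in~$X$. Moreover, by Proposition~\ref{prop13} the group~$X$, and hence its subgroup $\mathcal{I}_{\pi^\prime}(X, Y)$, is $\pi^\prime$-torsion-free. Since a locally cyclic group is precisely an abelian group all of whose $2$-generated subgroups are cyclic (an easy induction on the number of generators: if $\langle z_1, \ldots, z_k \rangle = \langle z \rangle$, then $\langle z_1, \ldots, z_{k+1} \rangle = \langle z, z_{k+1} \rangle$), it suffices to show that any two elements $u, v \in \mathcal{I}_{\pi^\prime}(X, Y)$ lie in a common cyclic subgroup.

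Next I would choose $\pi^\prime$-numbers $q$ and~$r$ with $u^{q}, v^{r} \in Y$ and set $D = \langle u^{q}, v^{r} \rangle$ and $A = \langle u, v \rangle$. Since $D$ is a finitely generated subgroup of the locally cyclic group~$Y$, it is cyclic; since $A \leqslant \mathcal{I}_{\pi^\prime}(X, Y)$, it is a finitely generated $\pi^\prime$-torsion-free abelian group; and since the quotient $A/D$ is generated by the images of~$u$ and~$v$, whose orders divide $q$ and~$r$, the index $[A : D]$ divides $qr$ and is in particular a finite $\pi^\prime$-number. So everything reduces to the following claim: \emph{a finitely generated $\pi^\prime$-torsion-free abelian group~$A$ possessing a cyclic subgroup~$D$ of finite $\pi^\prime$-index is itself cyclic.}

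To prove the claim I would decompose $A = T \times F$, where $F$ is the finite torsion subgroup and $T$ is free abelian of finite rank; the $\pi^\prime$-torsion-freeness of~$A$ makes $F$ a $\pi$-group. The subgroup $FD/D \cong F/(F \cap D)$ of~$A/D$ is at the same time a $\pi$-group (a quotient of~$F$) and a $\pi^\prime$-group (a subgroup of~$A/D$), hence is trivial, so $F \leqslant D$. Passing to $A/F \cong T$, the cyclic subgroup $D/F$ has finite index in the free abelian group~$T$, which forces the rank of~$T$ to be at most~$1$. If this rank is~$0$, then $A = F \leqslant D \leqslant A$, so $A = D$ is cyclic; if it is~$1$, then $D/F$ is infinite, so $D$ is infinite cyclic and therefore torsion-free, whence $F = 1$ and $A = T \cong \mathbb{Z}$ is cyclic. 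In either case $\langle u, v \rangle = A$ is cyclic, finishing the argument.

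The proof is short, and I expect the only genuine obstacle to be keeping the bookkeeping with the prime sets $\pi$ and~$\pi^\prime$ straight: the point is precisely that the torsion of~$A$ is a $\pi$-group while the index $[A : D]$ is a $\pi^\prime$-number, and it is this incompatibility that collapses~$A$ onto~$D$ (up to the harmless free rank-one case). Everything else rests only on Propositions~\ref{prop13} and~\ref{prop14}, already established, together with the structure theorem for finitely generated abelian groups.
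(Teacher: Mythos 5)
Your proof is correct, and it reaches the conclusion by a genuinely different route from the paper's. The two arguments share the same opening moves: Proposition~\ref{prop14} makes $\mathcal{I}_{\pi^\prime}(X, Y)$ abelian and identifies it with the set of $\pi^\prime$-roots of elements of~$Y$, Proposition~\ref{prop13} supplies $\pi^\prime$-torsion-freeness, and the problem reduces to showing that two arbitrary elements $u$, $v$ of the isolator generate a cyclic subgroup. From that point the paper works one root at a time: it proves a lemma stating that if $y^{q} \in \langle x \rangle$ for a $\pi^\prime$-number~$q$, then $\langle x, y \rangle$ is cyclic --- reducing to $q$ prime and either cancelling a $q$-th power by $\pi^\prime$-torsion-freeness (the case $q \mid k$) or exhibiting the explicit generator $y^{u}x^{v}$ from a B\'ezout relation (the case $(k,q)=1$) --- and then applies this lemma twice, first to a generator of the cyclic subgroup $\langle u^{q}, v^{r} \rangle \leqslant Y$ together with~$u$, and then to the resulting generator together with~$v$. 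You instead treat both roots simultaneously: you bound the index $[\langle u, v\rangle : \langle u^{q}, v^{r}\rangle]$ by the $\pi^\prime$-number $qr$ and feed the situation into the structure theorem for finitely generated abelian groups, where the clash between the $\pi$-torsion subgroup and the $\pi^\prime$-index forces the torsion into the cyclic subgroup and caps the free rank at one. Each step of your reduction (the $2$-generator characterization of local cyclicity, the divisibility of $[A:D]$ by~$qr$, the triviality of $F/(F\cap D)$, and the rank-one dichotomy) is sound, also in the degenerate case where $\pi$ is the set of all primes and $q=r=1$. What the comparison buys: your version is conceptually tidier and isolates the arithmetic obstruction in one clean claim, at the price of invoking the classification of finitely generated abelian groups (and, implicitly, Cauchy's theorem to see that the torsion subgroup is a $\pi$-group); the paper's lemma is more elementary and constructive, producing an explicit generator with nothing beyond B\'ezout and torsion-freeness.
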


\begin{lemma}
If~$x, y \in X$ and~$y^{q} \in \langle x \rangle$ for~some \hbox{$\pi^\prime$-}num\-ber $q$, then the~subgroup $\langle x, y \rangle $ is cyclic.
\end{lemma}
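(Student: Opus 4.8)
The plan is to work inside $X$ and exploit both the $\pi'$-isolator machinery of Proposition~\ref{prop14} and the residual $\mathcal{F}_\pi$-structure of $X$. First I would observe that the hypothesis $y^q\in\langle x\rangle$ together with $x\in\langle x\rangle$ puts both $x$ and $y$ into $\mathcal{I}_{\pi'}(X,\langle x\rangle)$. Since $\langle x\rangle$ is abelian (indeed cyclic) and $X$ is residually an $\mathcal{F}_\pi$-group, Proposition~\ref{prop14} tells us that $\mathcal{I}_{\pi'}(X,\langle x\rangle)$ is an \emph{abelian} subgroup of $X$. Hence $\langle x,y\rangle$ is abelian, and in particular $x$ and $y$ commute; this is the first real gain, because now $\langle x,y\rangle$ is a finitely generated abelian group.

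Next I would rule out the obstruction of a non-cyclic abelian group, i.e.\ I must show $\langle x,y\rangle$ has rank $1$. By Proposition~\ref{prop13}, since $X$ is residually an $\mathcal{F}_\pi$-group it is $\pi'$-torsion-free, so $\langle x,y\rangle$ is a torsion-free finitely generated abelian group, i.e.\ $\langle x,y\rangle\cong\mathbb{Z}^d$ for some $d\geqslant 1$ (if $x$ has infinite order; the case $x=1$ forces $y^q=1$ hence $y=1$ and the claim is trivial, and if $x$ has finite order that order is a $\pi$-number—but torsion-freeness again forces $x=1$). Inside $\mathbb{Z}^d$ the relation $y^q=x^k$ for the integer $k$ with $x^k=y^q$, read additively as $q\,\mathbf{y}=k\,\mathbf{x}$, shows that $\mathbf y$ lies in the $\mathbb{Q}$-span of $\mathbf x$; but $\mathbf x$ also lies there, so the whole group $\langle\mathbf x,\mathbf y\rangle$ sits in a $1$-dimensional rational subspace, forcing $d=1$. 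A finitely generated torsion-free abelian group of rank $1$ is infinite cyclic, so $\langle x,y\rangle$ is cyclic.

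The main obstacle I expect is the bookkeeping around degenerate cases (whether $x$ has finite or infinite order, whether $x=1$) and making sure the relation $y^q\in\langle x\rangle$ is correctly transported into the additive picture; the core rank argument itself is short once commutativity and torsion-freeness are in hand. It is worth noting that the two invoked facts—abelianness of the isolator (Proposition~\ref{prop14}) and $\pi'$-torsion-freeness (Proposition~\ref{prop13})—are exactly what convert the a priori wild subgroup $\langle x,y\rangle$ into $\mathbb{Z}^d$, after which only elementary linear algebra over $\mathbb{Q}$ remains. I would then remark that this lemma feeds directly into Proposition~\ref{prop15}: an arbitrary finite subset of $\mathcal{I}_{\pi'}(X,Y)$ for locally cyclic $Y$ lies, by a finiteness reduction, in the isolator of a single cyclic $\langle x\rangle$, and repeated application of the lemma (combining generators two at a time) shows the subgroup they generate is cyclic, whence $\mathcal{I}_{\pi'}(X,Y)$ is locally cyclic.
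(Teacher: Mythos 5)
Your opening step---observing $x,y\in\mathcal{I}_{\pi^\prime}(X,\langle x\rangle)$ and invoking Proposition~\ref{prop14} to get commutativity---is exactly how the paper begins. The rest of your argument, however, rests on conflating $\pi^\prime$-torsion-freeness with torsion-freeness, and this is a genuine gap. Proposition~\ref{prop13} only excludes nontrivial elements whose order is a $\pi^\prime$-number; a residually $\mathcal{F}_{\pi}$-group may have plenty of $\pi$-torsion (for $\pi=\{p\}$, the groups $\mathbb{Z}/p\mathbb{Z}$ and $\mathbb{Z}\times\mathbb{Z}/p\mathbb{Z}$ are residually finite $p$-groups). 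So you are not entitled to conclude $\langle x,y\rangle\cong\mathbb{Z}^{d}$, and your treatment of the finite-order case is outright false: the claim that finite order of $x$ ``forces $x=1$'' already fails for $X=\mathbb{Z}/p\mathbb{Z}$ with $x=y$ a generator, where the hypotheses of the lemma hold with $x\neq 1$ (the lemma's conclusion still holds there, but not by your reasoning).

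The gap is repairable along your lines: since the image of $y$ in $\langle x,y\rangle/\langle x\rangle$ satisfies $\bar{y}^{q}=1$, that quotient is a finite cyclic $\pi^\prime$-group, so the torsion subgroup of $\langle x,y\rangle$---a $\pi$-group by Proposition~\ref{prop13}---maps trivially into it and hence lies in $\langle x\rangle$; if $x$ has infinite order this yields the torsion-freeness you wanted, and if $x$ has finite $\pi$-order, coprimality with $q$ gives $y\in\langle x\rangle$ directly. But note how much more economically the paper proceeds: after the commutativity step it writes $y^{q}=x^{k}$ with $q$ taken prime and splits into two cases. If $k=ql$, then $(y^{-1}x^{l})^{q}=1$ and $\pi^\prime$-torsion-freeness gives $y=x^{l}$; if $(k,q)=1$, B\'ezout's identity $ku+qv=1$ shows $x=(y^{u}x^{v})^{q}$ and $y=(y^{u}x^{v})^{k}$, so $y^{u}x^{v}$ generates $\langle x,y\rangle$. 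No structure theory of finitely generated abelian groups and no torsion analysis are needed, and the argument covers all orders of $x$ uniformly.
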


\begin{proof}
First of~all let observe that $\mathcal{I}_{\pi^\prime}(X, \langle x \rangle)$ is an~abelian subgroup by~the~previous proposition and~$y \in \mathcal{I}_{\pi^\prime}(X, \langle x \rangle )$. Therefore, $[x, y]=1$.

Let $y^{q}=x^{k}$. Without lost of~generality we can consider $q$ to~be prime. So the~only two cases are possible: $q\vert k$ and~$(k, q)=1$.

If~$k=ql$, then the~equality $y^{q}=x^{ql}$ implies $(y^{-1}x^{l})^{q}=1$. But~$X$ is residually an~\hbox{$\mathcal{F}_{\pi}$-}group, so it is \hbox{$\pi^\prime$-}torsion-free by~Proposition~\ref{prop13}. Therefore, $y=x^{l}$ and~$\langle x, y \rangle = \langle x \rangle$.

If~$(k, q)=1$, then $ku + qv = 1$ for~certain whole numbers~$u$,~$v$ and
\begin{gather*}
x=x^{ku+qv}=y^{qu}x^{qv}=(y^{u}x^{v})^{q},\\ y=y^{ku+qv}=y^{ku}x^{kv}=(y^{u}x^{v})^{k}.
\end{gather*}

Thus, $\langle x, y \rangle = \langle y^{u}x^{v} \rangle$.
\end{proof}

\begin{proof}[\indent {\it Proof of~proposition}]
Let $u$, $v$ be arbitrary elements of~$\mathcal{I}_{\pi^\prime}(X, Y)$. By~Proposition~\ref{prop14}, $\mathcal{I}_{\pi^\prime}(X, Y)$ coincides with~the~set of~all \hbox{$\pi^\prime$-}roots being extracted from the~elements of~$Y$ in~$X$. So $u^{q}=y_{1} \in Y$, \hbox{and~$v^{r}=y_{2} \in Y$} for~some \hbox{$\pi^\prime$-}num\-bers~$q$ and~$r$.

Since $Y$ is locally cyclic, the~subgroup $\langle y_{1}, y_{2} \rangle$ is cyclic and~is generated by~an~element $y \in Y$. By~Lemma, the~subgroup $\langle y, u \rangle$ is also cyclic and~is generated by~an~element $w \in \mathcal{I}_{\pi^\prime}(X, Y)$. If~we apply now Lemma to~$w$ and~$v$, then we get that $\langle w, v \rangle = \langle t \rangle$ for~some element $t \in \mathcal{I}_{\pi^\prime}(X, Y)$. Thus, $u, v \in \langle t \rangle$, and~the~proposition is proved.
\end{proof}

\section{The~main theorem}

To formulate the~main result of~this paper we need some auxiliary designations.

If~$Y$ is a~subgroup of~a~group~$X$, then we denote by~$\Delta_{\pi}(X)$ the~family of~all \hbox{$\pi^\prime$-}iso\-lated cyclic subgroups of~this group which are not \hbox{$\mathcal{F}_{\pi}$-}separ\-able in~it. By~Proposition~\ref{prop13}, if~a~subgroup is not \hbox{$\pi^\prime$-}iso\-lated in~$X$, then it is a~fortiori not~\hbox{$\mathcal{F}_{\pi}$-}separ\-able in~this group. So the~family of~all \hbox{$\mathcal{F}_{\pi}$-}separ\-able cyclic subgroups of~$X$ is maximal if~$\Delta_{\pi}(X)=\varnothing$.

Let
$$
G = \langle A*B;\ [H,K] = 1 \rangle
$$
be the~free product of~groups~$A$ and~$B$ with~commuting subgroups \hbox{$H \leqslant A$} and~\hbox{$K \leqslant B$} (these notations will be fixed till the~end of~the~paper). We put
$$
\Theta_{\pi}(HK) = \{(X \cap H)(Y \cap K) \mid X \in \Omega_{\pi}(A), Y \in \Omega_{\pi}(B)\}
$$
and denote by $\Lambda_{\pi}(HK)$ the~family of~all cyclic subgroups lying\linebreak and~\hbox{$\pi^\prime$-}iso\-lated in~$HK$ but being not~separable by~$\Theta_{\pi}(HK)$.

\begin{theorem}\label{thrm21}
Let $\pi$ be a~set of~prime numbers that coincides with~the~set of~all prime numbers or~is one-element, and~let at~least one of~the~following equivalent statements holds:

1) $A$ and~$B$ are residually \hbox{$\mathcal{F}_{\pi}$-}groups, $H$ and~$K$ are \hbox{$\mathcal{F}_{\pi}$-}separ\-able in~the~factors;

2) $G$ is residually an~\hbox{$\mathcal{F}_{\pi}$-}group.

Then a~\hbox{$\pi^\prime$-}iso\-lated cyclic subgroup of~$G$ is \hbox{$\mathcal{F}_{\pi}$-}separ\-able in~this group if, and~only if, it is not conjugated with~any subgroup of~the~family
$$
\Delta_{\pi}(A) \cup \Delta_{\pi}(B) \cup \Lambda_{\pi}(HK).
$$
\end{theorem}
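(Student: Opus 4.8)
The plan is to exploit the structure of $G$ as a fundamental group of a graph of groups. The group $G = \langle A*B;\ [H,K]=1\rangle$ can be realized as the free product of $A$ and $B$ amalgamating nothing, but with the relation $[H,K]=1$ it is better viewed as the tree product built from $A$, $B$, and the direct product $D = H\times K$ (a sort of "bridge" vertex), where $D$ is amalgamated to $A$ along $H$ and to $B$ along $K$. Equivalently, $G$ is the fundamental group of a graph of groups with two edges and vertex groups $A$, $H\times K$, $B$ and edge groups $H$, $K$. The first task is to set up this decomposition carefully and recall the normal form for elements of such a tree product, together with the conjugacy criterion for cyclic subgroups: a cyclic subgroup is conjugate into a vertex group exactly when a generator is conjugate to an element of that vertex group, and otherwise the generator is a "cyclically reduced" element of length $\geqslant 2$.

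Next I would prove the \emph{necessity} direction. Suppose $\langle g\rangle$ is conjugate to a subgroup of $\Delta_\pi(A)\cup\Delta_\pi(B)\cup\Lambda_\pi(HK)$; I must show $\langle g\rangle$ is not $\mathcal{F}_\pi$-separable in $G$. Since conjugate subgroups are simultaneously separable or not, it suffices to treat the three cases for $\langle g\rangle$ itself. For $\langle g\rangle\leqslant A$ with $\langle g\rangle\in\Delta_\pi(A)$: using that $A$ is $\mathcal{F}_\pi$-separable-ish and $H$ is $\mathcal{F}_\pi$-separable in $A$ (hypothesis 1), together with Proposition~\ref{prop11} and Proposition~\ref{prop12} applied to the retraction-like behaviour, one shows that $\Omega_\pi(G)\text{-}Cl(\langle g\rangle)\cap A = \Omega_\pi(A)\text{-}Cl(\langle g\rangle) \supsetneq \langle g\rangle$, so $\langle g\rangle$ is not separable in $G$. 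The case $\langle g\rangle\leqslant B$ is symmetric. For $\langle g\rangle\in\Lambda_\pi(HK)$, i.e. $\langle g\rangle$ lies and is $\pi'$-isolated in $HK=H\times K$ but is not separable by the family $\Theta_\pi(HK)$: here the point is that every $N\in\Omega_\pi(G)$ intersects $HK$ in a subgroup containing some member of $\Theta_\pi(HK)$ (because $N\cap A\in\Omega_\pi(A)$ and $N\cap B\in\Omega_\pi(B)$, and $H,K$ commute), hence $\langle g\rangle N\cap HK \supseteq \langle g\rangle\cdot((X\cap H)(Y\cap K))$ for suitable $X,Y$, and non-separability by $\Theta_\pi(HK)$ forces non-separability by $\Omega_\pi(G)$.

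Then I would prove the \emph{sufficiency} direction, which is the heart of the matter: if $\langle g\rangle$ is $\pi'$-isolated in $G$ and is not conjugate into any member of the bad families, then $\langle g\rangle$ is $\mathcal{F}_\pi$-separable. I would split according to the length of a cyclically reduced form of $g$. If $g$ is conjugate into $A$ (respectively $B$), say $g\in A$, then by hypothesis $\langle g\rangle\notin\Delta_\pi(A)$ means $\langle g\rangle$ is $\mathcal{F}_\pi$-separable in $A$; one then lifts a separating subgroup of $\Omega_\pi(A)$ to one in $\Omega_\pi(G)$ by constructing a suitable homomorphism from $G$ onto a finite $\pi$-group — this uses that $G/[\text{something}]$ maps onto the relevant finite quotient of $A$, and crucially that $H$ is $\mathcal{F}_\pi$-separable in $A$ and $K$ in $B$ so that the commuting-subgroup relation survives in the quotient. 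If $g$ lies in a conjugate of $HK$, one uses non-membership in $\Lambda_\pi(HK)$ together with the residual $\mathcal{F}_\pi$-ness of $G$ and Proposition~\ref{prop15} (the isolator of a cyclic subgroup of $H\times K$ is locally cyclic, hence cyclic). The remaining, generic case is where $g$ has cyclically reduced length $\geqslant 2$ and is not conjugate into a vertex group at all; here $\langle g\rangle$ is automatically $\pi'$-isolated-compatible and one must build, for each element $w\notin\langle g\rangle$, a finite $\pi$-quotient of $G$ separating them. I expect \textbf{this last case to be the main obstacle}: it requires a Mal'cev-style construction — realizing $G$ as a subgroup of a tree product of finite-quotient analogues, or building an explicit homomorphism to a finite $\pi$-group that respects the amalgamated structure — and controlling the images of the syllables of $w$ and of powers of $g$ simultaneously. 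The standard tool here is to pass to a finite-index subgroup that is itself a tree product (using Proposition~\ref{prop12} on subnormal finite-index subgroups, or the Bass–Serre action on the tree), reduce to separating in a vertex group, and then invoke the $\mathcal{F}_\pi$-separability hypotheses on $A$, $B$, $H$, $K$; the bookkeeping of how $\pi'$-isolatedness of $\langle g\rangle$ in $G$ descends to the relevant vertex/edge groups is the delicate part.

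Finally, I would note that the equivalence of statements (1) and (2) is essentially Loginova's criterion for residual $\mathcal{F}_\pi$-ness of $G$, and can either be cited or re-derived quickly: (2)$\Rightarrow$(1) because $A$, $B$ embed in $G$ and $H = A\cap HK$, $K = B\cap HK$ are then seen to be $\mathcal{F}_\pi$-separable using that $HK$ is; (1)$\Rightarrow$(2) by the construction of enough finite $\pi$-quotients, which is a special case of the machinery developed for the sufficiency direction above.
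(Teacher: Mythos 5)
Your decomposition is the right one: the tree product $A *_H (H\times K) *_K B$ is exactly the paper's presentation of $G$ as $\langle M*N;\,U\rangle$ with $U=HK$, $M=\langle A*U;\,H\rangle$, $N=\langle B*U;\,K\rangle$, and your necessity argument (intersecting a normal subgroup $L\in\Omega_{\pi}(G)$ with $A$, $B$, $H$, $K$) is essentially the paper's, although you only need the easy inclusion $\Omega_{\pi}(A)\text{-}Cl(\langle g\rangle)\subseteq\Omega_{\pi}(G)\text{-}Cl(\langle g\rangle)\cap A$, not the equality you assert. The equivalence of (1) and (2) is indeed just Loginova's criterion and is cited, not reproved.

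The sufficiency direction, however, is where your proposal stops being a proof and becomes a wish list, and moreover the difficulty is located in the wrong place. The paper reduces everything to a criterion for amalgamated products (Proposition~\ref{prop23}, imported from earlier work, with its hypotheses supplied by Loginova via Propositions~\ref{prop24} and~\ref{prop25}): once $\{1\}$ and $U$ are separable by $\Omega_{\pi}(G,M)$ and $\Omega_{\pi}(G,N)$, the ``generic'' elements of cyclically reduced length $\geqslant 2$ in $G$ that you flag as the main obstacle are disposed of wholesale; what remains --- and this is the actual content of the theorem --- is to show that a $\pi^\prime$-isolated cyclic subgroup $C$ of $M$ (or $N$) not conjugate into $\Delta_{\pi}(A)\cup\Lambda_{\pi}(HK)$ is separable by the \emph{induced} family $\Omega_{\pi}(G,M)$, i.e.\ one must separate $C$ from elements of $U\setminus H$ and from elements of arbitrary syllable length in $M$, not merely from elements of $A$. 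Your sketch treats this vertex case as a routine ``lift a separating subgroup of $\Omega_{\pi}(A)$'', but no such lift exists without further machinery: the paper's mechanism is the family of quotients $M_{X,Q}=\langle A/X * U/Q;\ HX/X=HQ/Q\rangle$ attached to $(H,\pi)$-compatible pairs, together with the cited fact that such amalgams of finite $\pi$-groups are residually $\mathcal{F}_{\pi}$ with all $\pi^\prime$-isolated cyclic subgroups separable (this is exactly where the hypothesis that $\pi$ is all primes or a singleton enters), the verification that $H$ is separable by $\Theta_{\pi}(U)$ so that reduced lengths survive in $M_{X,Q}$, and --- the genuinely delicate point your plan never touches --- the fact that the image $C\rho_{X,Q}$ need not be $\pi^\prime$-isolated in $M_{X,Q}$, which forces passage to its $\pi^\prime$-isolator, cyclic by Propositions~\ref{prop22} and~\ref{prop15} because length preservation bounds root extraction, and a case analysis on whether $n$ divides $mt$. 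None of these steps is present or replaceable by the vague ``Mal'cev-style construction'' you invoke, so as it stands the sufficiency half is a genuine gap.
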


\begin{proof}
First of~all we note that the~equivalence of~the~conditions~1 and~2 was stated in~\cite{li05}. So the~only thing that it is necessary to~prove is the~criterion of~the~\hbox{$\mathcal{F}_{\pi}$-}separability of~a~cyclic subgroup of~$G$. We begin with~the~check of~the~necessity of~the~condition.

If~a~cyclic subgroup~$C$ belongs to~$\Delta_{\pi}(A)$, then it is not \hbox{$\mathcal{F}_{\pi}$-}separ\-able in~$A$ and~there exists an~element $a \in A\backslash C$ such that $a \in CX$ for~any subgroup $X \in \Omega_{\pi}(A)$. But~then $a \in CL$ for~every subgroup $L \in \Omega_{\pi}(G)$ since $L \cap A \in \Omega_{\pi}(A)$. Therefore, $C$ is not \hbox{$\mathcal{F}_{\pi}$-}separ\-able in~$G$. It is obvious that any subgroup conjugated with~$C$ is also not \hbox{$\mathcal{F}_{\pi}$-}separ\-able in~$G$.

It can be proved in~precisely the~same way that any subgroup\linebreak of~$\Delta_{\pi}(B)$ or~of~$\Lambda_{\pi}(HK)$ is not \hbox{$\mathcal{F}_{\pi}$-}separ\-able in~$G$. It needs only note that, if~$L \in \Omega_{\pi}(G)$, then
$$
(L \cap H)(L \cap K) \in \Theta_{\pi}(HK).
$$

Before passing to~the~sufficiency we give some facts about the~structure and~properties of~free products with~commuting subgroups.

Recall that \textit{the free product of~groups~$M$ and~$N$ with~subgroups\linebreak $U \leqslant M$ and~$V \leqslant N$ amalgamated according to~an~isomorphism\linebreak $\varphi\colon U \to V$,}
$$
T = \langle M*N;\ U = V,\ \varphi \rangle,
$$
is the~quotient group of~the~ordinary free product~$M*N$ of~$M$ and~$N$ by~the~normal closure of~the~set $\{u(u\varphi)^{-1} \mid u \in U\}$.

It is known that the~subgroups of~$T$ generated by~the~generators of~$M$ and~$N$ are isomorphic to~these groups and, therefore, may be identified with~them. Herewith, $U$ and~$V$ turn out to~be coincident and~this lets us to~write down $T$ in~the~form
$$
T=\langle M*N;\ U \rangle
$$
considering $\varphi$ as~given.

We need also one more simply checked property of~elements of~generalized free products of~two groups. Recall that \textit{the~length of~an~element} of~$T$ is the~length of~any reduced form of~this element.

\begin{proposition}[{\cite[Proposition~2.1.6]{li11}}]\label{prop22}
For every two elements $x, y \in T$, if~one of~them has an~even length and~$y=x^{q}$ for~some\linebreak positive number~$q$, then the~other element has an~even length too\linebreak \hbox{and~$l(y)=l(x)q$.} \qed
\end{proposition}

It is easy to~see~\cite{li05} that the~structure of~$G$ can be described in~terms of~the~construction of~free product with~amalgamated subgroup as~follows.

Let $U=HK$, $M=\langle A, U \rangle$, and~$N=\langle B, U \rangle$. Then $U$ is the~direct product $H\times K$ of~$H$ and~$K$, $M$ is the~free product $\langle A * U;\ H \rangle$ of~$A$ and~$U$ with~$H$ amalgamated, $N$ is the~free product $\langle B * U;\ K \rangle$ of~$B$ and~$U$ with~$K$ amalgamated, and~$G$ is the~free product $\langle M * N;\ U\rangle$ of~$M$ and~$N$ with~$U$ amalgamated.

If $Y$ is a~subgroup of~a~group~$X$, we put
$$
\Omega_{\pi}(X, Y)=\{L \cap Y \mid L \in \Omega_{\pi}(X)\}
$$
and denote by~$\Delta_{\pi}(X, Y)$ the~family of~all cyclic subgroups lying\linebreak and~\hbox{$\pi^\prime$-}iso\-lated in~$Y$, but being not separable by~$\Omega_{\pi}(X, Y)$. The following statements take place.

\begin{proposition}[{\cite[Theorems 1.2 and~1.6]{li12}}]\label{prop23}
Let $\pi$ be a~set of~prime numbers which coincides with~the~set of~all prime numbers or~is one-element, and~let $T = \langle M*N;\ U \rangle$ be the~free product of~groups~$M$ and~$N$ with~an~amalgamated subgroup~$U$. If~$\{1\}$ and~$U$ are separable by~both~$\Omega_{\pi}(T, M)$ and~$\Omega_{\pi}(T, N)$, then a~\hbox{$\pi^\prime$-}iso\-lated cyclic subgroup of~$T$ is \hbox{$\mathcal{F}_{\pi}$-}separ\-able in~this group if, and~only if, it is not conjugated with~any subgroup of~$\Delta _{\pi}(T, M) \cup \Delta _{\pi}(T, N)$. \qed
\end{proposition}

\begin{proposition}[{\cite[Lemmas 1 and~3]{li05}}]\label{prop24}
Let $\pi$ be a~set of~prime numbers which coincides with~the~set of~all prime numbers or~is one-element. If~$A$, $B$ are residually \hbox{$\mathcal{F}_{\pi}$-}groups and~$H$, $K$ are \hbox{$\mathcal{F}_{\pi}$-}separ\-able in~the~free factors, then $\{1\}$ and~$U$ are separable by~both~$\Omega_{\pi}(G, M)$ and~$\Omega_{\pi}(G, N)$. \qed
\end{proposition}

It follows also from the~proof of~Lemmas~1 and~3 of~\cite{li05} that the~statement below holds.

\begin{proposition}\label{prop25}
Let $\pi$ be a~set of~prime numbers which coincides with~the~set of~all prime numbers or~is one-element, \hbox{$X \in \Omega_{\pi}(A)$,} \hbox{$Y \in \Omega_{\pi}(B)$,} and~$Q=(X \cap H)(Y \cap K)$. Then any subgroup $R \in \Omega_{\pi}(M)$ such that $R \cap A=X$ and~$R \cap U=Q$ belongs to~$\Omega_{\pi}(G, M)$. The~same statement holds for~the~subgroups of~$\Omega_{\pi}(N)$. \qed
\end{proposition}

Now we can turn straight to~proof of~sufficiency. Due to~Propositions~\ref{prop23} and~\ref{prop24}, we need only show that every subgroup of~$\Delta_{\pi}(G, M)$ is conjugated with~some subgroup of~$\Delta_{\pi}(A) \cup \Lambda_{\pi}(U)$ and~every subgroup of~$\Delta_{\pi}(G, N)$ does with~some subgroup of~$\Delta_{\pi}(B) \cup \Lambda_{\pi}(U)$. We consider $M$, the~arguments for~$N$ are the~same.

Subgroups $X \leqslant A$ and~$Q \leqslant U$ will be called \textit{$(H, \pi)$-com\-patible} if~there exists a~subgroup $L \in \Omega_{\pi}(M)$ such that \hbox{$L \cap A=X$} and\linebreak $L \cap U=Q$. Since in~this case $X \cap H=Q \cap H$, the~function
$$
\varphi_{X, Q}\colon HX/X \to HQ/Q
$$
which maps an~element $hX$, $h \in H$, to~$hQ$ is a~correctly defined isomorphism of~subgroups. Therefore, we can construct the~group
$$
M_{X,Q} = \langle A/X*U/Q;\ HX/X = HQ/Q,\ \varphi_{X, Q} \rangle
$$
and extend the~natural homomorphisms of~$A$ onto~$A/X$ and~of~$U$ onto~$U/Q$ to~the~homomorphism $\rho_{X, Q}$ of~$M$ onto~$M_{X, Q}$.

Note that to~prove the~separability of~a~cyclic subgroup~$C$ of~$M$ by~$\Omega_{\pi}(G, M)$ it is sufficient to~be able to~find for~every element \hbox{$g \in M\backslash C$} a~pair of~subgroups $X \in \Omega_{\pi}(A)$ and~$Y \in \Omega_{\pi}(B)$ such that $g\rho_{X, Q}$ doesn't belong to~some \hbox{$\pi^\prime$-}iso\-lated cyclic subgroup $D_{X, Q}$ of~$M_{X, Q}$ containing $C\rho_{X, Q}$ (where, as~early, $Q=(X \cap H)(Y \cap K))$.

Indeed, it is proved in~\cite{li12} that the~$(H, \pi)$-compatibility of~$X$ and~$Q$ implies that $M_{X, Q}$ is residually an~\hbox{$\mathcal{F}_{\pi}$-}group and~all its \hbox{$\pi^\prime$-}iso\-lated cyclic subgroups are \hbox{$\mathcal{F}_{\pi}$-}separ\-able. So, if~$X$ and~$Y$ possess the~mentioned properties, then there exists a~subgroup $R_{X, Q} \in \Omega_{\pi}(M_{X, Q})$ such that $g\rho_{X, Q} \notin D_{X, Q}R_{X, Q}$ and, hence, $g\rho_{X, Q} \notin C\rho_{X, Q}R_{X, Q}$.

Further, since the~quotient groups~$A/X$ and~$U/Q$ are finite, we can consider that
$$
R_{X, Q} \cap A/X=R_{X, Q} \cap U/Q=1.
$$

Then the~pre-image~$R$ of~$R_{X, Q}$ under~$\rho_{X, Q}$ satisfies the~relations \hbox{$R \cap A=X$} \hbox{and~$R \cap U=Q$.} It follows now from Proposition~\ref{prop25} that $R \in \Omega_{\pi}(G, M)$ and~meanwhile $g \notin CR$ as~it is required.

So let $C=\langle c\rangle $ be a~\hbox{$\pi^\prime$-}iso\-lated cyclic subgroup of~$M$ being not conjugated with~any subgroup of~$\Delta_{\pi}(A) \cup \Lambda_{\pi}(U)$. We show that $C$ is separable by~$\Omega_{\pi}(G, M)$.

Let $g \in M$ be an~arbitrary element not belonging to~$C$, and~let 
$$
g=g_{1}g_{2}\ldots g_{m},\ c=c_{1}c_{2}\ldots c_{n}
$$
be reduced forms of~$g$ and~$c$ considering as~the~elements of~the~generalized free product~$M$. Applying, if~necessary, a~suitable inner automorphism of~$G$ defining by~an~element of~$M$ we can consider further that $c$ is cyclically reduced.

Let, at~first, $n=1$, and~let, for~definiteness, $c \in A$ (the case \hbox{when~$c \in U$} is considered in~the~same way).

Note that if~$X \in \Omega_{\pi}(A)$, $Y \in \Omega_{\pi}(B)$, and~$Q=(X \cap H)(Y \cap K)$, then all cyclic subgroups of~the~free factors of~$M_{X, Q}$ (the finite \hbox{$\pi$-}groups) are \hbox{$\pi^\prime$-}iso\-lated. So it is sufficient to~point out a~pair of~subgroups $X \in \Omega_{\pi}(A)$, $Y \in \Omega_{\pi}(B)$ such that $g\rho_{X, Q} \notin C\rho_{X, Q}$.

By the~condition, $C$ is separable by~$\Omega_{\pi}(A)$. Therefore, if~$g \in A$, then there exists a~subgroup $X \in \Omega_{\pi}(A)$ such that $g \notin CX$ and, hence, $g\rho_{X, Q} \notin C\rho_{X, Q}$ regardless of~the~choice of~$Y$.

Let $g \notin A$. Then, if~$m=1$, $g=g_{1} \in U\backslash H$. When $m>1$, every syllable $g_{i}$ of~the~reduced form of~$g$ belongs to~one of~the~free factors and~doesn't belong to~the~amalgamated subgroup.

By the~condition of~the~theorem, $H$ is separable by~$\Omega_{\pi}(A)$.

If $u \in U\backslash H$ and~$u=hk$ for~suitable elements $h \in H$, $k \in K$, then $k \ne 1$ and, since $B$ is residually an~\hbox{$\mathcal{F}_{\pi}$-}group, there exists a~subgroup $Y \in \Omega_{\pi}(B)$ such that $k \notin Y$. Then 
$$
u \notin H(X \cap H)(Y \cap K)=H(Y \cap K)
$$
regardless of~the~choice of~a~subgroup $X \in \Omega_{\pi}(A)$. Thus, $H$ is separable by~$\Theta_{\pi}(U)$.

It follows from the~told that, for~every~$i$ ($1 \leqslant i \leqslant m$), one can point out a~pair of~subgroups $X_{i} \in \Omega_{\pi}(A)$, $Y_{i} \in \Omega_{\pi}(B)$ such that $g_{i} \notin HX_{i}$ if~$g_{i} \in A$ and~$g_{i} \notin H(Y_{i} \cap K)$ if~$g_{i} \in U$. Let us put
$$
X = \bigcap_{i = 1}^m {X_i},\ Y = \bigcap_{i = 1}^m {Y_i}.
$$

It is obvious that then $X \in \Omega_{\pi}(A)$, $Y \in \Omega_{\pi}(B)$ and~the~presentation of~$g\rho_{X, Q}$ as~the~product
$$
g\rho_{X, Q}=g_{1}\rho_{X, Q}g_{2}\rho_{X, Q}\ldots g_{m}\rho_{X, Q}
$$
is still the~reduced form in~$M_{X, Q}$. Therefore, its length $l(g\rho_{X, Q})$ equals the~length of~$g$, and, if~$m=1$, then $g\rho_{X, Q} \in U\rho_{X, Q}\backslash H\rho_{X, Q}$. Thus, in~this case the~element $g\rho_{X, Q}$ doesn't belong to~$C\rho_{X, Q}$ too.

Let now $n \geqslant 2$. As above, we find a~pair of~subgroups $X \in \Omega_{\pi}(A)$, $Y \in \Omega_{\pi}(B)$ such that $l(g\rho_{X, Q})=l(g)$ and~$l(c\rho_{X, Q})=l(c)$. Note that $c\rho_{X, Q}$ is still a~cyclically reduced element.

For any pair of~subgroups $V \in \Omega_{\pi}(A)$, $W \in \Omega_{\pi}(B)$ such that $V \leqslant X$ and~$W \leqslant Y$, the~equality $l(c\rho_{V, P})=l(c)$ holds, where
$$
P=(V \cap H)(W \cap K).
$$

So the~powers of~the~roots which can be extracted from~$c\rho_{V, P}$ are bounded as~a~whole by~Proposition~\ref{prop22}. It follows now from Proposition~\ref{prop15} that $\mathcal{I}_{\pi^\prime}(M_{V, P}, C\rho_{V, P})$ is a~cyclic subgroup. Show that $V$\linebreak and~$W$ can be chosen in~such a~way that $g\rho_{V, P}$ doesn't belong\linebreak to~$\mathcal{I}_{\pi^\prime}(M_{V, P}, C\rho_{V, P})$.

Write $n$ in~the~form $n=qt$, where~$q$ is a~\hbox{$\pi$-}num\-ber, $t$ is a~\hbox{$\pi^\prime$-}num\-ber if~$\pi$ doesn't coincide with~the~set of~all prime numbers and~$t=1$ otherwise.

\textit{Case~1.} $n$ doesn't divide~$mt$.

Since $n$ doesn't divide $mt$,\,then,\,by~Proposition~\ref{prop22},\,\hbox{$(g\rho_{X, Q})^{t} \notin C\rho_{X, Q}$.} Show that $g\rho_{X, Q} \notin \mathcal{I}_{\pi^\prime}(M_{X, Q}, C\rho_{X, Q})$.

Let $d_{X, Q}$ denote a~generator of~$\mathcal{I}_{\pi^\prime}(M_{X, Q}, C\rho_{X, Q})$, and~let ($d_{X, Q})^{z}=c\rho_{X, Q}$. It follows from Proposition~\ref{prop22} that then $z\vert n$. But~$z$ is a~\hbox{$\pi^\prime$-}num\-ber, so it divides $t$ and, therefore, 
$$
(\mathcal{I}_{\pi^\prime}(M_{X, Q}, C\rho_{X, Q}))^{t} \leqslant C\rho_{X, Q}.
$$
Thus, supposing that $g\rho_{X, Q} \in \mathcal{I}_{\pi^\prime}(M_{X, Q}, C\rho_{X, Q})$ we come to~the~inclusion $(g\rho_{X, Q})^{t} \in C\rho_{X, Y}$, which contradicts the~relation stated above.

\textit{Case~2.} $mt=nk$ for~certain positive~$k$.

Since $C$ is \hbox{$\pi^\prime$-}iso\-lated in~$M$ and~$g \notin C$, then $g^{t} \ne c^{\pm k}$. Arguing as~above we find subgroups $R \in \Omega_{\pi}(A)$, $S \in \Omega_{\pi}(B)$ such that
$$
(g^{-t}c^{k})\rho_{R, L} \ne 1 \ne (g^{-t}c^{-k})\rho_{R, L},
$$
where~$L=(R \cap H)(S \cap K)$. Let us put $V=X \cap R$, $W=Y \cap S$, and~$P=(V \cap H)(W \cap K)$.

Then $(g\rho_{V, P})^{t} \ne (c\rho_{V, P})^{\pm k}$, and, since 
$$
l(g\rho_{V, P})=l(g)=m,\ l(c\rho_{V, P})=l(c)=n,
$$
we have $(g\rho_{V, P})^{t} \notin C\rho_{V, P}$. As~well as~in~the~case discussed above it follows that $g\rho_{V, P} \notin \mathcal{I}_{\pi^\prime}(M_{V, P}, C\rho_{V, P})$, and~the~proof is finished.
\end{proof}

\section{Some corollaries}

Unfortunately, to~describe the~family $\Lambda_{\pi}(HK)$ is difficult in~general case, and~our further efforts will be directed on~making it at~least in~some special cases.

\begin{proposition}\label{prop31}
If $A$ is \hbox{$\mathcal{F}_{\pi}$-}quasi-regu\-lar by~$H$ and~$B$ is \hbox{$\mathcal{F}_{\pi}$-}quasi-regu\-lar by~$K$, then $\Lambda_{\pi}(HK)=\Delta_{\pi}(HK)$.
\end{proposition}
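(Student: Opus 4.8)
The statement to prove is that under the quasi-regularity hypotheses on $A$ by $H$ and on $B$ by $K$, the families $\Lambda_\pi(HK)$ and $\Delta_\pi(HK)$ coincide. Recall $\Lambda_\pi(HK)$ consists of the cyclic subgroups that lie and are $\pi'$-isolated in $HK$ but are not separable by $\Theta_\pi(HK)$, while $\Delta_\pi(HK)$ consists of those that lie and are $\pi'$-isolated in $HK=U$ but are not $\mathcal{F}_\pi$-separable in $U$ itself. Since both families are defined on the same set of cyclic subgroups of $U$ (the $\pi'$-isolated ones), it suffices to show that a cyclic subgroup $C\leqslant U$ is separable by $\Theta_\pi(U)$ if and only if it is $\mathcal{F}_\pi$-separable in $U$; equivalently, that the $\Theta_\pi(U)$-closure of $C$ equals its $\Omega_\pi(U)$-closure.

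The plan is to prove that the two families of subgroups $\Theta_\pi(U)$ and $\Omega_\pi(U)$ induce the same closure operator on subgroups of $U$. One direction is immediate: every member $(X\cap H)(Y\cap K)$ of $\Theta_\pi(U)$ is a normal subgroup of $U=H\times K$ of finite $\pi$-index (being the direct product of $X\cap H\trianglelefteq H$ and $Y\cap K\trianglelefteq K$, each of finite $\pi$-index in its factor), hence lies in $\Omega_\pi(U)$; so $\Theta_\pi(U)\subseteq\Omega_\pi(U)$ and therefore $\Omega_\pi(U)\text{-}\mathit{Cl}(C)\leqslant\Theta_\pi(U)\text{-}\mathit{Cl}(C)$ for every $C$. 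For the reverse inclusion, I would take an arbitrary $N\in\Omega_\pi(U)$ and show that $N$ contains some member of $\Theta_\pi(U)$, which forces $\Theta_\pi(U)\text{-}\mathit{Cl}(C)\leqslant\Omega_\pi(U)\text{-}\mathit{Cl}(C)$ and completes the argument. This is where quasi-regularity enters.

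The key step: given $N\in\Omega_\pi(U)$ with $U=H\times K$, set $M_H=N\cap H\in\Omega_\pi(H)$ and $M_K=N\cap K\in\Omega_\pi(K)$. By hypothesis $A$ is $\mathcal{F}_\pi$-quasi-regular by $H$, so there is $X\in\Omega_\pi(A)$ with $X\cap H\leqslant M_H$; similarly there is $Y\in\Omega_\pi(B)$ with $Y\cap K\leqslant M_K$. Then $(X\cap H)(Y\cap K)\leqslant M_H\times M_K\leqslant N$ (the last inclusion because $N\geqslant (N\cap H)(N\cap K)$ in a direct product), and $(X\cap H)(Y\cap K)\in\Theta_\pi(U)$ by definition. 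Hence every $N\in\Omega_\pi(U)$ dominates a member of $\Theta_\pi(U)$, giving $\bigcap_{Q\in\Theta_\pi(U)}CQ\leqslant\bigcap_{N\in\Omega_\pi(U)}CN$, i.e. the two closures agree. Consequently $C$ is $\Theta_\pi(U)$-separable iff it is $\mathcal{F}_\pi$-separable in $U$, and since the two families range over the same $\pi'$-isolated cyclic subgroups of $U$, we conclude $\Lambda_\pi(HK)=\Delta_\pi(HK)$.

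I do not anticipate a serious obstacle here; the only point requiring a little care is the elementary fact that in the direct product $U=H\times K$ one has $(N\cap H)(N\cap K)\leqslant N$ and that $(N\cap H)\times(N\cap K)$ has finite $\pi$-index in $U$ (since it contains the finite-$\pi$-index subgroup obtained from the quasi-regularity witnesses, or directly since $U/N$ is a finite $\pi$-group and hence so is $U/((N\cap H)\times(N\cap K))$, being a quotient of $H/(N\cap H)\times K/(N\cap K)$ which embeds in $(U/N)\times(U/N)$). Everything else is a direct unwinding of the definitions of $\Theta_\pi$, $\Lambda_\pi$, $\Delta_\pi$ and of $\mathcal{F}_\pi$-quasi-regularity as recorded in Proposition~\ref{prop11} and the surrounding text.
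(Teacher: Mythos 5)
Your proposal is correct and follows essentially the same route as the paper: the converse containment is immediate from $\Theta_{\pi}(HK)\subseteq\Omega_{\pi}(HK)$, and the key step, exactly as in the paper, is to apply the quasi-regularity hypotheses to $N\cap H$ and $N\cap K$ to produce a member $(X\cap H)(Y\cap K)$ of $\Theta_{\pi}(HK)$ inside an arbitrary $N\in\Omega_{\pi}(HK)$, so the two closure operators (and hence the families $\Lambda_{\pi}(HK)$ and $\Delta_{\pi}(HK)$) coincide. No gaps; your writeup just spells out the routine verifications that the paper leaves implicit.
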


\begin{proof}
Indeed, if~$L \in \Omega_{\pi}(HK)$, then we can use the~\hbox{$\mathcal{F}_{\pi}$-}quasi-regu\-lari\-ty and~find such subgroups $X \in \Omega_{\pi}(A)$, $Y \in \Omega_{\pi}(B)$ that $X \cap H \leqslant L \cap H$, $Y \cap K \leqslant L \cap K$ and, hence, $(X \cap H)(Y \cap K) \leqslant L$. Therefore, any subgroup lying in~$HK$ and~being separable by~$\Omega_{\pi}(HK)$ turns out to~be separable by~$\Theta_{\pi}(HK)$ too. Since the~inverse statement is obvious, $\Lambda_{\pi}(HK)=\Delta_{\pi}(HK)$ as~it is required.
\end{proof}

The~next corollary follows directly from Propositions~\ref{prop31} and~\ref{prop12}.

\begin{corollary}\label{corl32}
Let $H$, $K$ be subnormal in~$A$, $B$ and~have finite \hbox{$\pi$-}indexes in~them. Then $\Lambda_{\pi}(HK)=\Delta_{\pi}(HK)$. \qed
\end{corollary}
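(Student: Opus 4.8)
The plan is to deduce Corollary~\ref{corl32} directly from Proposition~\ref{prop31} together with Proposition~\ref{prop12}, exactly as the remark preceding the statement suggests. Recall that Proposition~\ref{prop31} says: if $A$ is \hbox{$\mathcal{F}_{\pi}$-}quasi-regular by~$H$ and $B$ is \hbox{$\mathcal{F}_{\pi}$-}quasi-regular by~$K$, then $\Lambda_{\pi}(HK)=\Delta_{\pi}(HK)$. So it suffices to verify that under the hypotheses of the corollary the two quasi-regularity statements hold.

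First I would observe that by hypothesis $H$ is a subnormal subgroup of finite \hbox{$\pi$-}index of~$A$. This is precisely the hypothesis of Proposition~\ref{prop12} (with $X=A$, $Y=H$), so that proposition yields immediately that $A$ is \hbox{$\mathcal{F}_{\pi}$-}quasi-regular by~$H$. Symmetrically, since $K$ is subnormal of finite \hbox{$\pi$-}index in~$B$, Proposition~\ref{prop12} gives that $B$ is \hbox{$\mathcal{F}_{\pi}$-}quasi-regular by~$K$. Having both quasi-regularity statements in hand, Proposition~\ref{prop31} applies verbatim and delivers $\Lambda_{\pi}(HK)=\Delta_{\pi}(HK)$, which is the desired conclusion.

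There is no real obstacle here — this is a routine chaining of two previously established results, and the proof is a single sentence once one notices that ``subnormal of finite \hbox{$\pi$-}index'' is shared vocabulary between the corollary and Proposition~\ref{prop12}. The only point requiring the slightest care is to make sure the finite-index condition in the corollary is read as finite \hbox{$\pi$-}index (so that the relevant quotients genuinely lie in $\mathcal{F}_{\pi}$), which matches the formulation of Proposition~\ref{prop12}; with that reading the hypotheses transfer without any adjustment. Accordingly I would simply write: ``By Proposition~\ref{prop12}, $A$ is \hbox{$\mathcal{F}_{\pi}$-}quasi-regular by~$H$ and $B$ is \hbox{$\mathcal{F}_{\pi}$-}quasi-regular by~$K$; the claim now follows from Proposition~\ref{prop31}.'' — which is presumably why the authors marked the statement with \qed and omitted the proof.
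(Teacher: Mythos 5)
Your argument is exactly the paper's: the remark preceding the corollary states that it follows directly from Propositions~\ref{prop31} and~\ref{prop12}, and your chaining (subnormal of finite \hbox{$\pi$-}index gives \hbox{$\mathcal{F}_{\pi}$-}quasi-regularity of~$A$ by~$H$ and of~$B$ by~$K$ via Proposition~\ref{prop12}, then apply Proposition~\ref{prop31}) is precisely that intended deduction. The proof is correct and complete.
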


Alas! the~structure of~the~set $\Delta_{\pi}(HK)$ is also not~simple. And~so we continue the~research of~the~separability of~the~cyclic subgroups lying in~$HK$ and~prove

\begin{proposition}\label{prop33}
Let $A$ and~$B$ be residually \hbox{$\mathcal{F}_{\pi}$-}groups, and~let $C$ be a~\hbox{$\pi^\prime$-}iso\-lated cyclic subgroup of~$HK$ generated by~an~element $c=hk$, where~$h \in H$, $k \in K$. If~$h$ has infinite order and~all subgroups of~$\Omega_{\pi}(\mathcal{I}_{\pi^\prime}(A, \langle h \rangle ))$ are \hbox{$\mathcal{F}_{\pi}$-}separ\-able in~$A$, then $C$ is separable by~$\Theta_{\pi}(HK)$.
\end{proposition}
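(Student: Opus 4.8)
The plan is to reduce the separability of $C=\langle hk\rangle$ by $\Theta_\pi(HK)$ to a statement purely about $A$ (plus a mild residual condition on $B$), using the hypothesis that all subgroups of $\Omega_\pi(\mathcal{I}_{\pi'}(A,\langle h\rangle))$ are $\mathcal{F}_\pi$-separable in $A$. Write $I_A=\mathcal{I}_{\pi'}(A,\langle h\rangle)$; by Proposition~\ref{prop15} and Proposition~\ref{prop14}, $I_A$ is locally cyclic, and since $h$ has infinite order it is in fact an infinite cyclic or locally-cyclic group with $\langle h\rangle$ of finite index in each finitely generated piece. The key observation is that since $I_A$ is $\pi'$-isolated and contains $h$, and since $C=\langle hk\rangle$ with $K$ commuting with $H\ni h$, one can track powers of $c$ componentwise: $c^n=h^nk^n$, so the subgroup $\langle c\rangle$ projects (via the direct decomposition $HK=H\times K$ inside $M$, as noted after Proposition~\ref{prop22}) onto $\langle h\rangle\leqslant H$ and onto $\langle k\rangle\leqslant K$.

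First I would take an arbitrary element $u\in HK\setminus C$, written $u=h_0k_0$ with $h_0\in H$, $k_0\in K$, and split into cases according to whether $h_0\in I_A$. If $h_0\notin I_A$: since $I_A$ is the $\Omega_\pi(A)$-closure-contained $\pi'$-isolator and (being the $\mathcal{F}_\pi$-separable hull issue) $I_A$ is $\mathcal{F}_\pi$-separable in $A$ by the hypothesis applied to the trivial member, there is $X\in\Omega_\pi(A)$ with $h_0\notin I_A X$; choosing any $Y\in\Omega_\pi(B)$ one checks $u\notin (X\cap H)(Y\cap K)\cdot C$ by pushing to $A/X$ and using that the image of $c$ lies in the image of $I_A$. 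The substantive case is $h_0\in I_A$. Here I would use the hypothesis in full force: $\langle h\rangle$ has finite $\pi'$-index-free… more precisely, since $I_A$ is locally cyclic and $h$ has infinite order, $C\cap (I_A\times K)$ — equivalently the relation between $u$ and $C$ inside $I_A\times K$ — is governed by an abelian (indeed locally cyclic $\times$ arbitrary) computation. Because $u\notin C$, either the $H$-components fail to match any power, or the $H$-components match some power $c^j$ but then $k_0\ne k^j$, i.e. $u c^{-j}=1\cdot(k_0k^{-j})$ is a nontrivial element of $K$. In the first subcase, find $M_0\in\Omega_\pi(I_A)$ separating $h_0$ from the relevant coset of $\langle h\rangle$; extend $M_0$ to $X\in\Omega_\pi(A)$ with $X\cap I_A\leqslant M_0$ using the hypothesis (this is exactly Proposition~\ref{prop11} applied to $I_A$, which is $\mathcal{F}_\pi$-separable in $A$, giving $\mathcal{F}_\pi$-quasi-regularity of $A$ by $I_A$); then $h_0\notin \langle h\rangle(X\cap H)$, and any $Y\in\Omega_\pi(B)$ works. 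In the second subcase, use that $B$ is residually an $\mathcal{F}_\pi$-group to get $Y\in\Omega_\pi(B)$ with $k_0k^{-j}\notin Y$, hence $k_0k^{-j}\notin Y\cap K$, and take any $X\in\Omega_\pi(A)$; then $u\notin (X\cap H)(Y\cap K)\cdot C$.

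The one delicate point threading through all cases is the bookkeeping inside $HK=H\times K$: one must verify that $(X\cap H)(Y\cap K)\cdot C$ meets $H\times K$ in exactly the "product" subgroup one expects, so that non-membership of the components gives non-membership of $u$. This is routine because $C=\langle hk\rangle$ sits diagonally and $(X\cap H)(Y\cap K)$ is a genuine direct product of subgroups of $H$ and $K$ respectively; the image of $c$ in $(H/(X\cap H))\times(K/(Y\cap K))$ is $(\bar h,\bar k)$, and $u$ maps to $(\bar h_0,\bar k_0)$, so $u\in C\cdot(X\cap H)(Y\cap K)$ iff $(\bar h_0,\bar k_0)$ is a power of $(\bar h,\bar k)$, which the case analysis above precludes. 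The main obstacle I anticipate is the first subcase: producing $X\in\Omega_\pi(A)$ whose intersection with $I_A$ lies below a prescribed $M_0\in\Omega_\pi(I_A)$ and simultaneously has $X\cap H$ small enough relative to $\langle h\rangle$ — this is where the full strength of "all subgroups of $\Omega_\pi(\mathcal{I}_{\pi'}(A,\langle h\rangle))$ are $\mathcal{F}_\pi$-separable in $A$" is needed rather than mere separability of $\langle h\rangle$ or of $I_A$, and it is exactly Proposition~\ref{prop11} that converts this hypothesis into the quasi-regularity statement that makes the argument go through.
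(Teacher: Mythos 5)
Your first case ($h_0\notin\mathcal{I}_{\pi^\prime}(A,\langle h\rangle)$) is fine, but the reduction breaks precisely at what you call the substantive case, in the subcase where $h_0\in\mathcal{I}_{\pi^\prime}(A,\langle h\rangle)$ and $h_0$ is not an exact power of $h$. There you propose to find $M_0\in\Omega_{\pi}(\mathcal{I}_{\pi^\prime}(A,\langle h\rangle))$ with $h_0\notin\langle h\rangle M_0$ and then transfer it to $A$ by quasi-regularity. No such $M_0$ exists unless $\langle h\rangle=\mathcal{I}_{\pi^\prime}(A,\langle h\rangle)$: by Proposition~\ref{prop14} every element of the isolator has a $\pi^\prime$-power in $\langle h\rangle$, and a $\pi^\prime$-number is invertible modulo the order of any finite $\pi$-quotient, so $h_0\in\langle h\rangle M_0$ for every $M_0\in\Omega_{\pi}(\mathcal{I}_{\pi^\prime}(A,\langle h\rangle))$. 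A concrete instance: $\pi=\{p\}$, $A=H=\mathbb{Z}[1/s]$ (additively, $s\ne p$ prime), $h=1$, $B=K=\mathbb{Z}$, $k=1$, $C=\langle(1,1)\rangle$, $g=(1/s,0)$. Here the hypotheses of the proposition hold, $g$ falls into your problematic subcase, its $H$-component cannot be separated from $\langle h\rangle$ in any finite $p$-quotient, and yet $g$ \emph{is} separable from $C$: in $\mathbb{Z}/p^e\times\mathbb{Z}/p^e$ matching the first coordinate forces the exponent to be $s^{-1}\bmod p^e$, which then fails in the second coordinate. So the separation must exploit both coordinates simultaneously; a componentwise argument cannot succeed. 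This joint analysis is exactly the bulk of the paper's proof, which you have no counterpart for: it introduces the generator $u$ of $\langle a,h\rangle$ inside the locally cyclic isolator, proves $\mathcal{I}_{\pi^\prime}(A,\langle h\rangle)^l\cap\langle u\rangle\leqslant\langle u^l\rangle$ and works inside the finitely generated abelian group $\langle u,k\rangle$ when $\langle k\rangle$ is finite, and when $\langle k\rangle$ is infinite passes to $G_Z=\langle A*B/Z;\ [H,KZ/Z]=1\rangle$ (so that the image of $k$ has finite $\pi$-order) and runs a congruence computation to show $g\rho_Z\notin\mathcal{I}_{\pi^\prime}(\langle u,kZ\rangle,C\rho_Z)$ before applying the quasi-regularity step.

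A secondary, repairable slip: in your other subcase ($h_0=h^j$, $k_0\ne k^j$) you take $Y$ with $k_0k^{-j}\notin Y$ and \emph{any} $X$. But $g\in C(X\cap H)(Y\cap K)$ means $(\bar h_0,\bar k_0)=(\bar h,\bar k)^n$ for \emph{some} $n$, not only $n=j$; if $h^{j-n}\in X$ for some $n$ with $\bar k^n=\bar k_0$, the separation fails, so $X$ must be chosen in concert with $Y$ (e.g.\ so that the order of $\bar h$ rules out the offending exponents). This again reflects the fact that the two coordinates cannot be handled independently; the unrepaired first subcase is the genuine gap.
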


\begin{proof}
Let $g \in HK\backslash C$ be an~arbitrary element. We need to~point~out such subgroups $X \in \Omega_{\pi}(A)$, $Y \in \Omega_{\pi}(B)$ that $g \notin C(X \cap H)(Y \cap K)$.

Write $g$ in~the~form $g=ab$ for~suitable elements $a \in H$, $b \in K$.

If $a \notin \Omega_{\pi}(A)$\hbox{-\textit{Cl}}$(\langle h \rangle)$, then there exists a~subgroup $X \in \Omega_{\pi}(A)$ such that $a \notin \langle h \rangle X$. Then $ab \notin C(X \cap H)K$ since $HK$ is the~direct product of~$H$ and~$K$, and~we can take $B$ wholly as~$Y$. The~case when~$b \notin \Omega_{\pi}(B)$\hbox{-\textit{Cl}}$(\langle k \rangle)$ is considered in~the~same~way.

Let now $a \in \Omega_{\pi}(A)$\hbox{-\textit{Cl}}$(\langle h \rangle)$ and~$b \in \Omega_{\pi}(B)$\hbox{-\textit{Cl}}$(\langle k \rangle)$.

Since $A$ is residually an~\hbox{$\mathcal{F}_{\pi}$-}group, then $\mathcal{I}_{\pi^\prime}(A, \langle h \rangle)$ is locally cyclic according to~Proposition~\ref{prop15}. By~the~condition, this subgroup is \hbox{$\mathcal{F}_{\pi}$-}separ\-able in~$A$, hence, $\Omega_{\pi}(A)$\hbox{-\textit{Cl}}$(\langle h \rangle)=\mathcal{I}_{\pi^\prime}(A, \langle h \rangle)$ and~$a \in \mathcal{I}_{\pi^\prime}(A, \langle h \rangle)$. Denote by~$u$ the~generator of~the~subgroup $\langle a, h \rangle$ and~consider two cases.

\textit{Case~1.} $\langle k \rangle$ is finite.

Since $B$ is residually an~\hbox{$\mathcal{F}_{\pi}$-}group, every its finite subgroup is\linebreak \hbox{$\mathcal{F}_{\pi}$-}separ\-able. Therefore, $b \in \Omega_{\pi}(B)$\hbox{-\textit{Cl}}($\langle k \rangle)=\langle k \rangle$ and~$C$ is a~\hbox{$\pi^\prime$-}iso\-lated cyclic subgroup of~the~finitely generated abelian group $\langle u, k \rangle$.

It is known \cite{li05} that all \hbox{$\pi^\prime$-}iso\-lated subgroups of~a~finitely generated nilpotent group are \hbox{$\mathcal{F}_{\pi}$-}separ\-able. So there exists a~subgroup \hbox{$L \in \Omega_{\pi}(\langle u, k \rangle)$} such that $g \notin CL$. 

Note that for~any \hbox{$\pi$-}num\-ber~$l$ $\mathcal{I}_{\pi^\prime}(A, \langle h \rangle)^{l} \cap \langle u\rangle \leqslant \langle u^{l} \rangle$.

Indeed, let $v \in \mathcal{I}_{\pi^\prime}(A, \langle h\rangle)$ and~$v^{l} \in \langle u \rangle$. By~Proposition~\ref{prop14}, $\mathcal{I}_{\pi^\prime}(A, \langle h \rangle)$ coincides with~the~set of~all \hbox{$\pi^\prime$-}roots being extracted from~$\langle h \rangle$ in~$A$. So there exists a~\hbox{$\pi^\prime$-}num\-ber $m$ such that $v^{m} \in \langle h \rangle \leqslant \langle u \rangle$. Since $l$ and~$m$ are relatively prime, the~equality $l\alpha + m\beta = 1$ holds for~suitable integers $\alpha$ and~$\beta$. From~this it follows that $v=v^{l\alpha + m\beta} \in \langle u \rangle$ and~$v^{l} \in \langle u^{l} \rangle$.

Let now $l=[\langle u \rangle : L \cap \langle u \rangle]$. By~Proposition~\ref{prop11}, $A$ is \hbox{$\mathcal{F}_{\pi}$-}quasi-regu\-lar by~$\mathcal{I}_{\pi^\prime}(A, \langle h \rangle)$. We use this fact and~choose a~subgroup \hbox{$X \in \Omega_{\pi}(A)$} so that $X \cap \mathcal{I}_{\pi^\prime}(A, \langle h \rangle) \leqslant \mathcal{I}_{\pi^\prime}(A, \langle h \rangle)^{l}$. Since $B$ is residually an~\hbox{$\mathcal{F}_{\pi}$-}group, there exists also a~subgroup $Y \in \Omega_{\pi}(B)$ satisfying the~condition\linebreak \hbox{$Y \cap \langle k \rangle=1$.}

Suppose that $g \in C(X \cap H)(Y \cap K)$ and~$g=c^{n}xy$ for~suitable elements $x \in X \cap H$, $y \in Y \cap K$ and~a~number~$n$. Since $g, c \in \langle u, k \rangle$ and~$a, h \in \mathcal{I}_{\pi^\prime}(A, \langle h \rangle)$, then
\begin{gather*}
x \in X \cap H \cap \langle u, k\rangle \cap \mathcal{I}_{\pi^\prime}(A, \langle h \rangle) = X \cap \mathcal{I}_{\pi^\prime}(A, \langle h \rangle) \cap \langle u \rangle \leqslant \\
\leqslant \mathcal{I}_{\pi^\prime}(A, \langle h \rangle)^{l} \cap \langle u \rangle \leqslant \langle u^{l} \rangle = L \cap \langle u \rangle
\end{gather*}
and 
$$
y \in Y \cap K \cap \langle u, k \rangle = Y \cap \langle k \rangle = 1,
$$
whence $g \in CL$ what contradicts the~choice of~$L$. Thus, 
$$
g \notin C(X \cap H)(Y \cap K)
$$
as~it is required.

\textit{Case~2.} $\langle k \rangle$ is infinite.

Let $a=u^{l}$ and~$h=u^{q}$. Since $\mathcal{I}_{\pi^\prime}(A, \langle h \rangle)$ coincides with~the~set of~all \hbox{$\pi^\prime$-}roots being extracted from $\langle h \rangle$ in~$A$, we can consider $q$ as~a~\hbox{$\pi^\prime$-}num\-ber.

Suppose that $b^{q}=k^{l}$. Then ($ab)^{q}=u^{lq}b^{q}=h^{l}k^{l}=(hk)^{l}$. But~$C$ is \hbox{$\pi^\prime$-}iso\-lated, so $q=1$ and~$g \in C$ what is impossible. Therefore, $b^{q}k^{-l} \ne 1$ and, since $B$ is residually an~\hbox{$\mathcal{F}_{\pi}$-}group, there exists a~subgroup \hbox{$Z \in \Omega_{\pi}(B)$} which doesn't contain this element.

Consider the~group 
$$
G_{Z} = \langle A*B/Z;\ [H,KZ/Z] = 1 \rangle.
$$

It is obvious that the~natural homomorphism of~$B$ onto~$B/Z$ can be extended to~the~surjective homomorphism $\rho_{Z}\colon G \to G_{Z}$. Let 
$$
D = \mathcal{I}_{\pi^\prime}(\langle u, kZ \rangle, C\rho_{Z}).
$$

Since $D$ lies in~the~finitely generated abelian group, it is cyclic and~is generated by~an~element~$d$. Write it in~the~form $d=x \cdot yZ$ for~suitable elements $x \in H$, $yZ \in KZ/Z$.

Let $s$ be a~\hbox{$\pi^\prime$-}num\-ber such that $d^{s} \in C\rho_{Z}$. Then $x^{s} \in \langle h\rangle $ and~so $x \in \mathcal{I}_{\pi^\prime}(A, \langle h \rangle)$. From~this it follows that $\mathcal{I}_{\pi^\prime}(A, \langle x \rangle) = \mathcal{I}_{\pi^\prime}(A, \langle h \rangle)$.

Suppose that $g\rho_{Z} \in D$. Then $(g\rho_{Z})^{r}=(c\rho_{Z})^{m}$ for~some numbers~$r$ and~$m$, while $r$ can be considered without lost of~generality as~a~\hbox{$\pi^\prime$-}num\-ber. Since $b \in \Omega_{\pi}(B)$\hbox{-\textit{Cl}}$(\langle k \rangle)$, then $b \in \langle k \rangle Z$ and~$b \equiv k^{t} \pmod{Z}$ for~a~suitable number~$t$. We have
$$
(u^{q} \cdot kZ)^{m} = (h \cdot kZ)^{m} = (c\rho_{Z})^{m} =  (g\rho_{Z})^{r} = (a \cdot bZ)^{r} = (u^{l} \cdot (kZ)^{t})^{r}
$$
whence $m \equiv tr \pmod{n}$, where~$n$ is the~order of~$kZ$, and~$qm=lr$ due to~infinity of~the~orders of~$h$ and~$u$.

Since $Z$ has a~finite \hbox{$\pi$-}index in~$B$, $n$ is a~\hbox{$\pi$-}num\-ber. Therefore, the~congruencies $qtr \equiv qm \equiv lr \pmod{n}$ imply that $qt \equiv l \pmod{n}$. But in~this case
$$
1 \equiv k^{tq-l} \equiv b^{q}k^{-l} \pmod{Z}
$$
what contradicts the~choice of~$Z$.

Thus, $g\rho_{Z} \notin D$, and, while we don't assert that $D$ is \hbox{$\pi^\prime$-}iso\-lated in~$H \cdot KZ/Z$, we can however repeat for~$G_{Z}$ the~same arguments as~in~the~case~1. As~a~result one can find in~this group such subgroups $X \in \Omega_{\pi}(A)$, $Y/Z \in \Omega_{\pi}(B/Z)$ that 
$$
g\rho_{Z} \notin D(X \cap H)(Y/Z \cap KZ/Z).
$$
It is obvious that $X$ and~$Y$ turn~out to~be required, and~the~proposition is proved.
\end{proof}

Note that with\,$H=A$\;and\,$K=B$\,the~equalities\,\hbox{$\Theta_{\pi}(HK)=\Omega_{\pi}(HK)$} and~$\Lambda_{\pi}(HK)=\Delta_{\pi}(HK)$ hold. So Proposition~\ref{prop33} and~the~corollary given below can be used also for~describing of~\hbox{$\mathcal{F}_{\pi}$-}separ\-able cyclic subgroups of~an~arbitrary direct product of~groups.

\begin{corollary}\label{corl34}
Let the~following statements hold:

1) $A$ and~$B$ are residually \hbox{$\mathcal{F}_{\pi}$-}groups,

2) $H$ and~$K$ are \hbox{$\pi^\prime$-}iso\-lated in~the~free factors,

3) all cyclic subgroups which lie in~$H$ and~are \hbox{$\pi^\prime$-}iso\-lated in~$A$ are \hbox{$\mathcal{F}_{\pi}$-}separ\-able in~$A$,

4) all cyclic subgroups which lie in~$K$ and~are \hbox{$\pi^\prime$-}iso\-lated in~$B$ are \hbox{$\mathcal{F}_{\pi}$-}separ\-able in~$B$,

5) $H$ and~$K$ don't contain locally cyclic subgroups unless $\pi$ coincides with~the~set of~all prime numbers.

Then $\Lambda_{\pi}(HK)=\varnothing$.
\end{corollary}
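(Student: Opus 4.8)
The plan is to show that every cyclic subgroup $C$ lying in $HK$ and $\pi'$-isolated in $HK$ is separable by $\Theta_\pi(HK)$; this is exactly the assertion $\Lambda_\pi(HK)=\varnothing$. Write a generator of $C$ as $c=hk$ with $h\in H$, $k\in K$, using that $HK=H\times K$. The natural first move is a case split on the orders of $h$ and $k$. If either $h$ or $k$ has infinite order, then by hypotheses (2)--(4) and Proposition~\ref{prop15} the relevant isolator in the free factor is locally cyclic, hence — since (5) forbids genuine locally cyclic subgroups when $\pi$ is a proper set of primes, and when $\pi$ is all primes the locally cyclic case causes no trouble — it is in fact cyclic, so all subgroups of $\Omega_\pi(\mathcal{I}_{\pi'}(A,\langle h\rangle))$ (resp.\ of $B$, $\langle k\rangle$) are $\mathcal{F}_\pi$-separable in the factor by hypothesis (3) (resp.\ (4)) together with Proposition~\ref{prop11}. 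Thus Proposition~\ref{prop33} applies directly (in the $h$-infinite case, or its mirror image with the roles of $A,H,h$ and $B,K,k$ interchanged in the $k$-infinite case), giving separability of $C$ by $\Theta_\pi(HK)$.

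The remaining case is that both $\langle h\rangle$ and $\langle k\rangle$ are finite, so $C=\langle c\rangle$ is a finite cyclic subgroup of the abelian group $HK$. Here I would argue as follows. Since $C$ is $\pi'$-isolated in $HK$ and finite, its order is a $\pi$-number. Because $A$ and $B$ are residually $\mathcal{F}_\pi$-groups, every finite subgroup of $A$ and of $B$ is $\mathcal{F}_\pi$-separable there; in particular $\langle h\rangle$ is $\mathcal{F}_\pi$-separable in $A$ and $\langle k\rangle$ in $B$. Given $g=ab\in HK\setminus C$ with $a\in H$, $b\in K$, I split according to whether $a\in\langle h\rangle$ and $b\in\langle k\rangle$: if $a\notin\langle h\rangle$, separability of $\langle h\rangle$ in $A$ yields $X\in\Omega_\pi(A)$ with $a\notin\langle h\rangle X\supseteq\langle h\rangle(X\cap H)$, and taking $Y=B$ gives $g=ab\notin C(X\cap H)K\supseteq C(X\cap H)(Y\cap K)$ using $HK=H\times K$; symmetrically if $b\notin\langle k\rangle$. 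If both $a\in\langle h\rangle$ and $b\in\langle k\rangle$, then $g\in C'$, where $C'$ is the (finite cyclic) subgroup $\langle h\rangle\times\langle k\rangle$ containing $C$; but $C'$ is a finitely generated abelian — hence nilpotent — group, and $C$ is $\pi'$-isolated in the \emph{ambient} group $HK$, hence in $C'$ as well, so by the cited fact that $\pi'$-isolated subgroups of finitely generated nilpotent groups are $\mathcal{F}_\pi$-separable, there is $L\in\Omega_\pi(C')$ with $g\notin CL$. One then has to replace $L$ by subgroups coming from $\Omega_\pi(A)$ and $\Omega_\pi(B)$: since $\langle h\rangle$, $\langle k\rangle$ are finite with $\mathcal{F}_\pi$-separable, hence (being subnormal of finite $\pi$-index in their own isolators, which are themselves $\mathcal{F}_\pi$-quasi-regular targets) one uses Propositions~\ref{prop11} and \ref{prop12} exactly as in Case~1 of the proof of Proposition~\ref{prop33} to descend $L\cap\langle h\rangle$ and $L\cap\langle k\rangle$ to the required $X\cap H$ and $Y\cap K$.

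I expect the finite-order case to be the real obstacle, not because the ideas are new — it closely parallels Case~1 of Proposition~\ref{prop33} — but because of the bookkeeping: one must keep straight that $C$ being $\pi'$-isolated in $HK$ (not merely in $\langle h\rangle\times\langle k\rangle$) is what forces $|C|$ to be a $\pi$-number and makes the descent via $\mathcal{F}_\pi$-quasi-regularity of the finite subgroups go through, and one must verify that the separating subgroup $L$ of the finite abelian group can indeed be realised as $(X\cap H)(Y\cap K)$ for suitable $X\in\Omega_\pi(A)$, $Y\in\Omega_\pi(B)$. The role of hypothesis (5) is only to rule out an infinite locally cyclic $\langle h\rangle$ or $\langle k\rangle$ when $\pi\neq\{\text{all primes}\}$, so that in the infinite-order branch the isolator in the factor is genuinely cyclic and Proposition~\ref{prop33}'s hypothesis "$h$ has infinite order and all subgroups of $\Omega_\pi(\mathcal{I}_{\pi'}(A,\langle h\rangle))$ are $\mathcal{F}_\pi$-separable in $A$" is met; I would make this appeal to (5) explicit and note that when $\pi$ is the full set of primes no exclusion is needed.
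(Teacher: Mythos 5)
Your plan is correct, and its main branch is the paper's own: when $h$ (or, symmetrically, $k$) has infinite order, conditions 2)--5) are used exactly as in the paper to see that $\mathcal{I}_{\pi^\prime}(A,\langle h\rangle)$ lies in $H$ and is cyclic (equal to $\langle h\rangle$ when $\pi$ is the set of all primes, and cyclic by Proposition~\ref{prop15} and condition~5 otherwise), so that its finite \hbox{$\pi$-}index subgroups are \hbox{$\pi^\prime$-}iso\-lated cyclic subgroups contained in $H$, hence \hbox{$\mathcal{F}_{\pi}$-}separ\-able by condition~3, and Proposition~\ref{prop33} applies. Where you genuinely diverge is the finite case: the paper does not pass through $\langle h\rangle\times\langle k\rangle$, nilpotent separability, or any quasi-regularity descent at all; it simply uses residual \hbox{$\mathcal{F}_{\pi}$-}ness of $A$ and $B$ to choose $X\in\Omega_{\pi}(A)$, $Y\in\Omega_{\pi}(B)$ avoiding the finitely many nontrivial elements $a^{-1}h^{m}$, $b^{-1}k^{n}$, after which $ab\in C(X\cap H)(Y\cap K)$ forces $a=h^{s}$, $b=k^{s}$ and so $g=c^{s}\in C$, a contradiction; note this does not even use the \hbox{$\pi^\prime$-}iso\-lated\-ness of $C$. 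Your longer route also works, but two of its justifications should be repaired: $\langle h\rangle\times\langle k\rangle$ is finite abelian, not necessarily cyclic (harmless, since you only use that it is finitely generated nilpotent); and the descent of $L\cap\langle h\rangle$, $L\cap\langle k\rangle$ to $X\cap H$, $Y\cap K$ is not a matter of "subnormality in their own isolators" or Proposition~\ref{prop12} (whose hypothesis of finite \hbox{$\pi$-}index in $A$ fails) --- what you actually need is \hbox{$\mathcal{F}_{\pi}$-}quasi-regu\-lari\-ty of $A$ by the finite subgroup $\langle h\rangle$, which follows from the sufficiency part of Proposition~\ref{prop11} because finite subgroups (and their finite \hbox{$\pi$-}index subgroups) of a residually \hbox{$\mathcal{F}_{\pi}$-}group are \hbox{$\mathcal{F}_{\pi}$-}separ\-able. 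With those fixes your argument is sound, just heavier than the paper's direct finite-case computation.
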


\begin{proof}
Let $C$ be a~\hbox{$\pi^\prime$-}iso\-lated cyclic subgroup of~$HK$ generated\linebreak by~an~element $c=hk$, where~$h \in H$, $k \in K$.

Let $C$ be finite, and~let $g=ab$, $a \in H$, $b \in K$, be an~arbitrary element which doesn't belong to~$C$. Since~$A$ and~$B$ are residually \hbox{$\mathcal{F}_{\pi}$-}finite, there exist subgroups $X \in \Omega_{\pi}(A)$, $Y \in \Omega_{\pi}(B)$ such that 
$$
X \cap \langle h \rangle = Y \cap \langle k \rangle = 1,
$$
$a^{-1}h^{m} \notin X$ whenever $a^{-1}h^{m} \ne 1$, and~$b^{-1}k^{n} \notin Y$ whenever $b^{-1}k^{n} \ne 1$. It is obvious that then $g \notin C(X \cap H)(Y \cap K)$ and~so $C$ is separable by~$\Theta_{\pi}(HK)$.

If $C$ is infinite, then at~least one of~the~elements~$h$ and~$k$ has an~infinite order. Let, for~definiteness, $h$ be such an~element.

Since $H$ is \hbox{$\pi^\prime$-}iso\-lated in~$A$, then $\mathcal{I}_{\pi^\prime}(A, \langle h \rangle) \leqslant H$. If~$\pi$ coincides with~the~set of~all prime numbers, then any subgroup is \hbox{$\pi^\prime$-}iso\-lated and~so $\mathcal{I}_{\pi^\prime}(A, \langle h \rangle)=\langle h \rangle$. Otherwise, by~condition, $H$ doesn't contain locally cyclic subgroups, and, therefore, $\mathcal{I}_{\pi^\prime}(A, \langle h \rangle)$ is cyclic too. But, again by~condition, any cyclic subgroup which lies in~$H$ and~is \hbox{$\pi^\prime$-}iso\-lated in~$A$ is \hbox{$\mathcal{F}_{\pi}$-}separ\-able in~$A$. Hence, $\mathcal{I}_{\pi^\prime}(A, \langle h \rangle)$ and~all its subgroups of~finite \hbox{$\pi$-}index are \hbox{$\mathcal{F}_{\pi}$-}separ\-able in~$A$.

Thus, we can use Proposition~\ref{prop33} in~this case, which states that $C$ is separable by~$\Theta_{\pi}(HK)$.
\end{proof}

The corollary proved in~a~combination with~the~main theorem lets generalize the~result of~E.~D.~Loginova~\cite{li06} that, if~$G$ is residually finite and~all cyclic subgroups of~$A$ and~$B$ are finitely separable in~these groups, then all cyclic subgroups of~$G$ are finitely separable too.

\begin{theorem}\label{thrm35}
Let $\pi$ be a~set of~prime numbers which coincides with~the~set of~all prime numbers or~is one-element. Let all cyclic subgroups which lie in~$H$ and~are \hbox{$\pi^\prime$-}iso\-lated in~$A$ be \hbox{$\mathcal{F}_{\pi}$-}separ\-able in~$A$, and~let all cyclic subgroups which lie in~$K$ and~are \hbox{$\pi^\prime$-}iso\-lated in~$B$ be \hbox{$\mathcal{F}_{\pi}$-}separ\-able in~$B$. Let also $H$ and~$K$ don't contain locally cyclic subgroups unless $\pi$ coincides with~the~set of~all prime numbers. If~$G$ is residually an~\hbox{$\mathcal{F}_{\pi}$-}group, then a~\hbox{$\pi^\prime$-}iso\-lated cyclic subgroup of~this group is \hbox{$\mathcal{F}_{\pi}$-}separ\-able in~it if, and~only if, it is not conjugated with~any subgroup of~$\Delta_{\pi}(A) \cup \Delta_{\pi}(B)$. \qed
\end{theorem}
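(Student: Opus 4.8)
The plan is to combine Theorem~\ref{thrm21} with Corollary~\ref{corl34} in the most direct way possible. Theorem~\ref{thrm21} already tells us that, under the hypothesis that $G$ is residually an~\hbox{$\mathcal{F}_{\pi}$-}group, a~\hbox{$\pi^\prime$-}iso\-lated cyclic subgroup of~$G$ is \hbox{$\mathcal{F}_{\pi}$-}separ\-able in~$G$ if, and~only if, it is not conjugated with~any subgroup of~$\Delta_{\pi}(A) \cup \Delta_{\pi}(B) \cup \Lambda_{\pi}(HK)$. So the whole content of Theorem~\ref{thrm35} reduces to showing that, under its hypotheses, the third family $\Lambda_{\pi}(HK)$ is empty, so that it may be deleted from the union.

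First I would check that the hypotheses of Corollary~\ref{corl34} are all met. Conditions~3, 4 and~5 of that corollary are quoted verbatim among the hypotheses of Theorem~\ref{thrm35}. Condition~1 (that $A$ and~$B$ are residually \hbox{$\mathcal{F}_{\pi}$-}groups) follows from the assumption that $G$ is residually an~\hbox{$\mathcal{F}_{\pi}$-}group together with the equivalence of statements~1 and~2 in Theorem~\ref{thrm21}; indeed, $A$ and~$B$ embed in~$G$, so each is residually an~\hbox{$\mathcal{F}_{\pi}$-}group as a subgroup of one. The only slightly delicate point is condition~2, the requirement that $H$ and~$K$ be \hbox{$\pi^\prime$-}iso\-lated in the free factors: this too is contained in statement~1 of Theorem~\ref{thrm21}, since $H$ and~$K$ are there assumed \hbox{$\mathcal{F}_{\pi}$-}separ\-able in~$A$ and~$B$, and every \hbox{$\mathcal{F}_{\pi}$-}separ\-able subgroup is \hbox{$\pi^\prime$-}iso\-lated by Proposition~\ref{prop13}. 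Thus all five hypotheses of Corollary~\ref{corl34} hold, and the corollary yields $\Lambda_{\pi}(HK)=\varnothing$.

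With $\Lambda_{\pi}(HK)=\varnothing$ in hand, the union $\Delta_{\pi}(A) \cup \Delta_{\pi}(B) \cup \Lambda_{\pi}(HK)$ appearing in Theorem~\ref{thrm21} collapses to $\Delta_{\pi}(A) \cup \Delta_{\pi}(B)$, and the separability criterion stated there becomes exactly the statement of Theorem~\ref{thrm35}. The argument is therefore essentially a bookkeeping exercise in matching hypotheses; the only step requiring any thought is verifying the input hypotheses of Corollary~\ref{corl34} (particularly translating ``$H$, $K$ \hbox{$\mathcal{F}_{\pi}$-}separ\-able in the factors'' into ``$A$, $B$ residually \hbox{$\mathcal{F}_{\pi}$}'' and ``$H$, $K$ \hbox{$\pi^\prime$-}iso\-lated''), and I expect no genuine obstacle. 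Since Theorem~\ref{thrm35} carries a \verb|\qed| in the excerpt, the expected write-up is a short paragraph, not a displayed proof environment.
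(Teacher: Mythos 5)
Your proposal is correct and coincides with the paper's own (implicit) argument: Theorem~\ref{thrm35} is stated without a separate proof precisely because it is the combination of Theorem~\ref{thrm21} with Corollary~\ref{corl34}, where conditions~1 and~2 of the corollary are obtained, exactly as you do, from the equivalence of statements~1 and~2 in Theorem~\ref{thrm21} together with Proposition~\ref{prop13}, giving $\Lambda_{\pi}(HK)=\varnothing$ and collapsing the criterion to $\Delta_{\pi}(A)\cup\Delta_{\pi}(B)$. No gaps; this matches the intended proof.
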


It is not difficult to~show (see., e.~g., \cite{li13}) that every \hbox{$\pi^\prime$-}iso\-lated cyclic subgroup of~an~arbitrary free group is \hbox{$\mathcal{F}_{\pi}$-}separ\-able regardless of~the~choice of~a~set~$\pi$. As~it was already noted during the~proof of~Proposition~\ref{prop33} the~stronger assertion holds for~finitely generated nilpotent groups: all \hbox{$\pi^\prime$-}iso\-lated subgroups of~such groups are \hbox{$\mathcal{F}_{\pi}$-}separ\-able. So the~next two statements result from Theorem~\ref{thrm21} and~Corollary~\ref{corl34}.

\begin{theorem}\label{thrm36}
Let $\pi$ be a~set of~prime numbers which coincides with~the~set of~all prime numbers or~is one-element. Let $A$ and~$B$ be free groups, and~let $H$ and~$K$ be cyclic subgroups \hbox{$\pi^\prime$-}iso\-lated in~the~free factors. Then all \hbox{$\pi^\prime$-}iso\-lated cyclic subgroups of~$G$ are \hbox{$\mathcal{F}_{\pi}$-}separ\-able. \qed
\end{theorem}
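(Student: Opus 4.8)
Theorem~\ref{thrm36} is an instance of Theorem~\ref{thrm35}, which in turn is Theorem~\ref{thrm21} combined with Corollary~\ref{corl34}. So the first thing I would do is check that the hypotheses of Theorem~\ref{thrm35} hold in the present setting: $A$ and $B$ are free, $H$ and $K$ are cyclic and $\pi'$-isolated in the free factors. A free group is residually an $\mathcal{F}_\pi$-group for every $\pi$, so $G$ is residually an $\mathcal{F}_\pi$-group by Proposition~\ref{prop24} once we know $H$ and $K$ are $\mathcal{F}_\pi$-separable in $A$ and $B$ — and this is exactly the cited fact (see~\cite{li13}) that every $\pi'$-isolated cyclic subgroup of a free group is $\mathcal{F}_\pi$-separable. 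Condition~5 of Corollary~\ref{corl34} is immediate because a cyclic group contains no locally cyclic subgroup other than itself and its (cyclic) subgroups — more precisely, the only locally cyclic subgroups of $H$ are cyclic, and there is no infinite locally-but-not-finitely-generated cyclic subgroup inside a cyclic group, so condition~5 is vacuous. Conditions~3 and~4 say that every $\pi'$-isolated cyclic subgroup of $A$ lying in $H$ is $\mathcal{F}_\pi$-separable in $A$; since subgroups of $H$ that are $\pi'$-isolated in $A$ are in particular $\pi'$-isolated cyclic subgroups of the free group $A$, the cited result again applies.

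\textbf{Assembling the pieces.} Having verified conditions~1--5 of Corollary~\ref{corl34}, that corollary yields $\Lambda_\pi(HK)=\varnothing$. Plugging this into Theorem~\ref{thrm21}, a $\pi'$-isolated cyclic subgroup of $G$ is $\mathcal{F}_\pi$-separable in $G$ iff it is not conjugate to a subgroup of $\Delta_\pi(A)\cup\Delta_\pi(B)$. But $A$ and $B$ are free, and by the cited result \emph{every} $\pi'$-isolated cyclic subgroup of a free group is $\mathcal{F}_\pi$-separable, so $\Delta_\pi(A)=\Delta_\pi(B)=\varnothing$. Hence the ``forbidden'' family in Theorem~\ref{thrm21} is empty, and every $\pi'$-isolated cyclic subgroup of $G$ is $\mathcal{F}_\pi$-separable. (Alternatively, one invokes Theorem~\ref{thrm35} directly with $\Delta_\pi(A)\cup\Delta_\pi(B)=\varnothing$.)

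\textbf{Main obstacle.} There is essentially no obstacle: the whole content of this theorem has been front-loaded into Theorem~\ref{thrm21} and Corollary~\ref{corl34}. The only point requiring a moment's care is the book-keeping around condition~5 of Corollary~\ref{corl34} and the distinction, when $\pi$ is not the full set of primes, between ``$\langle h\rangle$ is $\pi'$-isolated in $H$'' and ``$\langle h\rangle$ is $\pi'$-isolated in $A$'': one must note that since $H=\langle h_0\rangle$ is itself $\pi'$-isolated in $A$, a $\pi'$-isolated subgroup of $A$ contained in $H$ is the same as a $\pi'$-isolated subgroup of $H$, and these are cyclic, so the external hypothesis ``every $\pi'$-isolated cyclic subgroup of a free group is $\mathcal{F}_\pi$-separable'' delivers exactly conditions~3 and~4. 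Once that identification is made, the proof is a one-line citation of the two earlier results.
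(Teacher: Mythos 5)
Your proposal is correct and follows essentially the same route as the paper: invoke the fact that every $\pi^\prime$-isolated cyclic subgroup of a free group is $\mathcal{F}_{\pi}$-separable to get $\Delta_{\pi}(A)=\Delta_{\pi}(B)=\varnothing$ and conditions 1--5 of Corollary~\ref{corl34} (hence $\Lambda_{\pi}(HK)=\varnothing$), then apply Theorem~\ref{thrm21}. The only cosmetic slip is attributing the residual $\mathcal{F}_{\pi}$-ness of $G$ to Proposition~\ref{prop24} rather than to the equivalence of conditions 1 and 2 stated in Theorem~\ref{thrm21}; since condition 1 is what you actually verify, this does not affect the argument.
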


\begin{theorem}\label{thrm37}
Let $\pi$ be a~set of~prime numbers which coincides with~the~set of~all prime numbers or~is one-element, and~let $A$ and~$B$ be finitely generated nilpotent groups. If~$G$ is residually an~\hbox{$\mathcal{F}_{\pi}$-}group, then all its \hbox{$\pi^\prime$-}iso\-lated cyclic subgroups are \hbox{$\mathcal{F}_{\pi}$-}separ\-able. \qed
\end{theorem}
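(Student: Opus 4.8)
The plan is to derive Theorem~\ref{thrm37} from Theorem~\ref{thrm21} by showing that, under the~hypothesis that $G$ is residually an~\hbox{$\mathcal{F}_{\pi}$-}group, the~whole obstruction family
$$
\Delta_{\pi}(A) \cup \Delta_{\pi}(B) \cup \Lambda_{\pi}(HK)
$$
is empty; then no~\hbox{$\pi^\prime$-}iso\-lated cyclic subgroup of~$G$ is conjugate into it, so by~Theorem~\ref{thrm21} every such subgroup is \hbox{$\mathcal{F}_{\pi}$-}separ\-able. The~only external input needed is the~fact recalled just before Theorem~\ref{thrm36} and~already used in~the~proof of~Proposition~\ref{prop33}: in~a~finitely generated nilpotent group every \hbox{$\pi^\prime$-}iso\-lated subgroup is \hbox{$\mathcal{F}_{\pi}$-}separ\-able.

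First I would dispose of~the~free factors. Since $A$ and~$B$ are finitely generated nilpotent, that fact gives $\Delta_{\pi}(A)=\Delta_{\pi}(B)=\varnothing$ straight from the~definition of~these families. It remains to~check that $\Lambda_{\pi}(HK)=\varnothing$, and~here I would apply Corollary~\ref{corl34}, verifying its five hypotheses in~turn. Hypotheses~(1) and~(2) follow from the~equivalence in~Theorem~\ref{thrm21}: as~$G$ is residually an~\hbox{$\mathcal{F}_{\pi}$-}group, $A$ and~$B$ are residually \hbox{$\mathcal{F}_{\pi}$-}groups and~$H$, $K$ are \hbox{$\mathcal{F}_{\pi}$-}separ\-able — hence, by~Proposition~\ref{prop13}, \hbox{$\pi^\prime$-}iso\-lated — in~the~factors. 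Hypotheses~(3) and~(4) are once more the~nilpotent fact, applied inside $A$ and~inside~$B$. For~hypothesis~(5), note that a~finitely generated nilpotent group is polycyclic, so all its subgroups — in~particular $H$ and~$K$ — are themselves finitely generated; since a~finitely generated locally cyclic group is cyclic, neither $H$ nor~$K$ contains a~non-cyclic locally cyclic subgroup, which is exactly what~(5) demands. Thus Corollary~\ref{corl34} applies and~$\Lambda_{\pi}(HK)=\varnothing$.

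Combining the~two parts, the~obstruction family is empty, so Theorem~\ref{thrm21} yields that every \hbox{$\pi^\prime$-}iso\-lated cyclic subgroup of~$G$ is \hbox{$\mathcal{F}_{\pi}$-}separ\-able; and~by~Proposition~\ref{prop13} an~\hbox{$\mathcal{F}_{\pi}$-}separ\-able cyclic subgroup is necessarily \hbox{$\pi^\prime$-}iso\-lated, so this is the~sharpest possible conclusion. I do not anticipate a~genuine obstacle, this being a~formal consequence of~the~main theorem and~its corollaries; the~single point requiring a~moment's attention is hypothesis~(5) of~Corollary~\ref{corl34} — one must read it as~concerning locally cyclic subgroups that fail to~be cyclic and~observe that the~Noetherian property of~finitely generated nilpotent groups makes it vacuous here, so that the~clause ``unless~$\pi$ coincides with~the~set of~all prime numbers'' is never actually invoked even in~the~\hbox{$p$-}group case.
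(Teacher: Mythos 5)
Your proposal is correct and follows exactly the route the paper intends: Theorem~\ref{thrm37} is obtained by combining Theorem~\ref{thrm21} with Corollary~\ref{corl34} and the fact that all \hbox{$\pi^\prime$-}isolated subgroups of finitely generated nilpotent groups are \hbox{$\mathcal{F}_{\pi}$-}separable, which simultaneously gives $\Delta_{\pi}(A)=\Delta_{\pi}(B)=\varnothing$ and hypotheses (3), (4) of the corollary. Your verification of hypothesis (5) via the maximal condition (every locally cyclic subgroup of a finitely generated nilpotent group is cyclic), and of (1), (2) via the equivalence in Theorem~\ref{thrm21} together with Proposition~\ref{prop13}, supplies precisely the details the paper leaves to the reader.
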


Note that the~conditions of~Theorem~\ref{thrm36} are true for~the~group 
$$
G_{mn} = \langle a, b;\ [a^{m}, b^{n}] = 1 \rangle
$$
with~a~suitable choice of~numbers~$m$ and~$n$. This group is investigated in~\cite{li08, li07}, where~the~finite separability of~all its cyclic subgroups is proved.

\end{document}